\newcommand{\qe}{\end{equation}}
\newcommand{\R}{{\mathbb R}}
\newcommand{\N}{{\mathbb N}}
\newcommand{\Z}{{\mathbb Z}}
\newcommand{\C}{{\mathbb C}}
\newcommand{\Hmm}[1]{\leavevmode{\marginpar{\tiny%
$\hbox to 0mm{\hspace*{-0.5mm}$\leftarrow$\hss}%
\vcenter{\vrule depth 0.1mm height 0.1mm width \the\marginparwidth}%
\hbox to
0mm{\hss$\rightarrow$\hspace*{-0.5mm}}$\\\relax\raggedright #1}}}
\newtheorem{theorem}{Theorem}[section]
\newtheorem{lemma}[theorem]{Lemma}
\newtheorem{corollary}[theorem]{Corollary}
\newtheorem{remark}[theorem]{Remark}
\begin{document}

\title{Continuum limit of fourth-order Schr\"{o}dinger equations on the lattice}

\author{Jiawei Cheng}
\address{Jiawei Cheng: School of Mathematical Sciences, Fudan University, Shanghai 200433, China; Laboratoire de Math\'{e}matiques d'Orsay, CNRS, Universit\'{e} Paris-Saclay, F-91405 Orsay Cedex, France}
\email{\href{mailto:jiawei.cheng@universite-paris-saclay.fr}{jiawei.cheng@universite-paris-saclay.fr}}

\author{Bobo Hua}
\address{Bobo Hua: School of Mathematical Sciences, LMNS, Fudan University, Shanghai 200433, China; Shanghai Center for Mathematical Sciences, Fudan University, Shanghai 200433, China}
\email{\href{mailto:bobohua@fudan.edu.cn}{bobohua@fudan.edu.cn}}

\begin{abstract}
    In this paper, we consider the discrete fourth-order Schr\"{o}dinger equation on the lattice $h\mathbb{Z}^2$. Uniform Strichartz estimates are established by analyzing frequency localized oscillatory integrals with the method of stationary phase and applying Littlewood-Paley inequalities. As an application, we obtain the precise rate of $L^2$ convergence from the solutions of discrete semilinear equations to those of the corresponding equations on the Euclidean plane $\mathbb{R}^2$ in the contimuum limit $h \rightarrow 0$.
\end{abstract}

\maketitle

\section{Introduction}\label{sec-intro}

We consider the discrete homogeneous semilinear fourth-order Schr\"{o}dinger equation (DSFS, in short) with Cauchy data for $\lambda \in \R$, $p>0$,  
\begin{equation}\label{equ-DSFS}
    \left\{
    \begin{aligned}
        & i \partial_t u(x,t) + {\Delta}^2_h u(x,t)  =\lambda |u|^{p-1}u, \\
        & u(x,0) = f(x).
    \end{aligned}
    \right.
\end{equation}
Here the complex valued function $u(x,t)$ is defined on $h\Z^d \times \R$, with the stepsize $h \in (0,1]$ and $d \in \Z_+$, where the lattices are given by
\begin{equation*}
    h\Z^d := \{x=hn:n=(n_1,\cdots,n_d) \in \Z^d\}.
\end{equation*}
On such lattices, we define the discrete Laplacian, depending on $h$, by 
\begin{equation*}
    (\Delta_hu)(x) :=\sum_{j=1}^{d} h^{-2} \left(u(x+he_j)+u(x-he_j)-2u(x)\right),
\end{equation*}
where $\{e_j\}$ is the orthnormal basis on $\R^d$.

The DSFS is a specific example of the following discrete fractional Schr\"{o}dinger equation,
\begin{equation*}
    i \partial_t u + {(-\Delta_h)}^{\alpha} u  =\lambda |u|^{p-1}u,
\end{equation*}
where $(-\Delta_h)^{\alpha}$ is defined via its Fourier multiplier, see Section \ref{ssec-sobo}. The family of models appear naturally in physics and biology such as the dynamics of Bose-Einstein condensates and DNA, refer to \cite{CBF01,K09,MCG99}. When $\alpha=1$, it is the classical Schr\"{o}dinger equation, while for $\alpha=\frac{1}{2}$, it is closely related to the wave equation, see \cite{BCH23,BCH24,S98,SK05}.

When $\alpha=2$ and $h=1$, the first author has proved in \cite{C24} that the fundamental solution of \eqref{equ-DSFS} decays like $|t|^{-\frac{1}{2}}$ as $t \rightarrow \infty$. As for general $h \in (0,1)$, one can certainly obtain similar decay estimates and consequent Strichartz estimates by a scaling argument to the corresponding oscillatory integral. By this way, however, the implicit constants in such estimates always depend on $h$, which blow up as $h \rightarrow 0$. Therefore, these estimates do not apply directly when we concern the continuum limit, i.e. the convergence problem for $h\rightarrow 0$.

Uniform Strichartz estimates in $h$ are established for the first time by Y. Hong and C. Yang in \cite{HY19-DCDS}, where they considered the discrete Schr\"{o}dinger equation on $h\Z^d$ and observed that the $h$-dependence can be removed by compensating certain fractional derivatives on the right side, which brings Sobolev spaces naturally. Their strategy also applied to other discrete dispersive equations, see \cite{CQ24,CA24}.

To the best of our knowledge, there are no well-known results on uniform space-time estimates for the classical model \eqref{equ-DSFS}. In this article we aim to establish frequency localized dispersive estimates for the fundamental solution of the DSFS, serving as the first step to Strichartz estimates. Indeed, the fundamental solution of \eqref{equ-DSFS}, obtained by discrete Fourier transform, is given by
\begin{equation}\label{equ-fundamental solution}
    G(x,t) = \frac{1}{(2\pi)^d}\int_{\mathbb{T}^d_h}  e^{ix \cdot \xi+i \omega^4 t} \,d\xi,\,\,\,\,\mbox{with}\,\, \omega(h,\xi) := \frac{1}{h}\sqrt{\sum_{j=1}^{d} (2-2\cos h\xi_j)}.
\end{equation}
Here $x\in h\Z^d, t\in\R$ and $\mathbb{T}^d_h=\frac{1}{h}[-\pi,\pi]^d$. Moreover, we can decompose the frequency domain $\mathbb{T}^d_h$ by the Littlewood-Paley projections, see Section \ref{ssec-PL}, and obtain
\begin{equation}\label{equ-K}
        K_{N,h}(x,t)= \int_{\mathbb{T}^d_h} \exp \left\{ix\cdot \xi +i\frac{16}{h^4}\left(\sum_{j=1}^{d}\sin^2\left(\frac{h\xi_j}{2}\right)\right)^2\right\} \eta\left(\frac{h\xi}{N}\right)d\xi
\end{equation}
%\begin{equation}\label{equ-K}
%    \begin{aligned}
%        K_{N,h}(x,t) &
%        = \int_{\mathbb{T}^d_h} \exp \left\{ix\cdot \xi +it\left(\frac{4}{h^2}\sum_{j=1}^{d}\sin^2\left(\frac{h\xi_j}{2}\right)\right)^2\right\} \psi_N(\xi)d\xi\\
%        & = 
%        \int_{\mathbb{T}^d_h} \exp \left\{ix\cdot \xi +i\frac{t}{h^4} \left(\sum_{j=1}^{d}2-2\cos(h\xi_j)\right)^2\right\} \eta\left(\frac{h\xi}{N}\right)d\xi
%    \end{aligned}
%\end{equation}
where $\eta$ is a fixed cutoff function and $N$ belongs to the set $2^{\mathbb{Z}}=\left\{2^j:j \in \Z \right\}$. $K_{N,h}$ can be regarded as an oscillatory integral with smooth phase and multiple parameters. Our first result is to determine the leading term of its asymptotic expansion when $d=2$.
\begin{theorem}\label{thm-main}
    %[frequency localized dispersive estimates]
    Let $d=2$. There exists a universal constant $C$, independent of $N,h,x$ and $t$, such that
    \begin{equation*}
        |K_{N,h}(x,t)| \leq C |t|^{-\frac{1}{2}}
    \end{equation*} 
    holds for large enough $t$, $2^{\Z} \ni N \leq 1$, $h>0$ and $x\in h\Z^2$.
\end{theorem}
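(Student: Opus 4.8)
The plan is to rescale the frequency variable so that the cutoff is fixed and all dependence on $h$ and $N$ collapses into a single oscillation parameter, and then to play a stationary-phase bound against the trivial $L^1$ bound. Concretely, I would substitute $\zeta = h\xi/N$ in \eqref{equ-K} (keeping the factor $t$ on the symbol $\omega^4$ as in \eqref{equ-fundamental solution}). Because $N\le 1$ and $\eta$ is supported in a fixed annulus well inside $[-\pi,\pi]^2$, the cutoff becomes $\eta(\zeta)$ and the domain $\mathbb{T}^2_h$ may be enlarged to $\R^2$. Writing $z:=\tfrac{N}{h}x$ and introducing the large parameter $\mu:=\tfrac{tN^4}{h^4}$, one obtains
\[
K_{N,h}(x,t)=\Big(\tfrac{N}{h}\Big)^2\!\int_{\R^2}\! e^{\,i(z\cdot\zeta+\mu\,\Phi_N(\zeta))}\,\eta(\zeta)\,d\zeta,
\qquad
\Phi_N(\zeta):=\frac{16}{N^4}\Big(\sum_{j=1}^2\sin^2\tfrac{N\zeta_j}{2}\Big)^2 .
\]
The normalization is chosen so that $\Phi_N\to|\zeta|^4$ smoothly as $N\to 0$; thus $\Phi_N$ is a controlled perturbation of the continuum fourth-order symbol, jointly smooth in $(N,\zeta)$ on $[0,1]\times\mathrm{supp}\,\eta$, while all the scaling now sits in the prefactor $(N/h)^2$ and in $\mu$.

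Next I would record two bounds for $I(\mu,z):=\int e^{i(z\cdot\zeta+\mu\Phi_N(\zeta))}\eta(\zeta)\,d\zeta$. The trivial one is $|I|\le\|\eta\|_{L^1}\lesssim 1$. The nontrivial one is the stationary-phase estimate $|I|\lesssim\mu^{-1}$, uniform in the translation $z$: since $z\cdot\zeta$ is linear it leaves the Hessian unchanged, so once $\det\mathrm{Hess}\,\Phi_N$ is bounded below on $\mathrm{supp}\,\eta$ one gets the full two-dimensional decay $\mu^{-1}$ (with rapid decay by non-stationary phase wherever $\nabla\Phi_N=-z/\mu$ has no solution). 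Interpolating the two gives $|I|\lesssim\min(1,\mu^{-1})\le\mu^{-1/2}$, and unwinding the scaling produces
\[
|K_{N,h}(x,t)|\lesssim\Big(\tfrac{N}{h}\Big)^2\mu^{-1/2}
=\Big(\tfrac{N}{h}\Big)^2\Big(\tfrac{tN^4}{h^4}\Big)^{-1/2}=|t|^{-1/2},
\]
the $h$- and $N$-factors cancelling exactly. This cancellation is the mechanism producing the uniform rate, and it explains why the natural exponent is $-\tfrac12$ rather than the $-1$ one might expect from a single nondegenerate critical point in two dimensions.

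The hard part will be the uniform non-degeneracy feeding the stationary-phase step. The joint smoothness above already makes all derivatives of $\Phi_N$ uniformly bounded on $\mathrm{supp}\,\eta$ for $N\in(0,1]$, so the only genuine content is a lower bound on $\det\mathrm{Hess}\,\Phi_N$. A direct computation gives $\mathrm{Hess}\,\Phi_N=\tfrac{8}{N^2}\big(S+2pC\big)$, where $p=1-\tfrac12(\cos N\zeta_1+\cos N\zeta_2)$, $C=\mathrm{diag}(\cos N\zeta_1,\cos N\zeta_2)$, and $S$ is the rank-one matrix formed from $(\sin N\zeta_1,\sin N\zeta_2)$; its determinant factors as $\tfrac{128}{N^4}\,p\,Q(N\zeta)$ with
\[
Q(\theta)=(\cos\theta_1+\cos\theta_2)+2\cos\theta_1\cos\theta_2\big(1-\cos\theta_1-\cos\theta_2\big).
\]
Since $p>0$ away from the origin, non-degeneracy reduces to the trigonometric inequality $Q(\theta)>0$ on the punctured disc $\{0<|\theta|\le 2\}$ swept out by $\theta=N\zeta$ as $N\le 1$ and $\zeta$ ranges over $\mathrm{supp}\,\eta$. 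Near the origin $Q(\theta)\sim\tfrac32|\theta|^2$ and $p\sim\tfrac14|\theta|^2$, which recovers $\det\mathrm{Hess}\,\Phi_N\to 48|\zeta|^4$ and the expected continuum non-degeneracy.

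The delicacy is that $Q$ genuinely vanishes just outside radius $2$ — along the diagonal $\theta_1=\theta_2$ at $|\theta|=\pi/\sqrt2$ and on the axes at $|\theta|=2\pi/3$ — so the estimate is sharp and forces $\mathrm{supp}\,\eta$ to keep $|N\zeta|\le 2$. Establishing $Q>0$ on the closed disc of radius $2$ with a quantitative lower bound so that $p\,Q\gtrsim|\theta|^4$ (whence $\det\mathrm{Hess}\,\Phi_N\gtrsim|\zeta|^4\gtrsim 1$), uniformly as $N$ interpolates between the continuum regime $N\to0$ and the genuinely discrete regime $N\approx1$, is the crux of the argument. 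I would isolate it as a separate lemma, proved by elementary but careful estimation of $p\,Q$ on the disc; given that lemma, the stationary-phase bound, the interpolation, and the exact cancellation above close the proof.
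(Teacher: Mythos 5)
Your reduction, the Hessian computation $\det\mathrm{Hess}\,\Phi_N=\tfrac{128}{N^4}\,p\,Q(N\zeta)$, and the exact cancellation $(N/h)^2\mu^{-1/2}=|t|^{-1/2}$ are all correct, and for small dyadic $N$ this is a viable (and arguably more direct) alternative to the paper's route, which instead passes to the variables $z_i=\tfrac2N\sin(\tfrac{N\xi_i}2)$, takes polar coordinates, does stationary phase in the angle and van der Corput in the radius to reach the same $\mu^{-1}$ bound. But there is a genuine gap at the top dyadic scales. Your argument hinges on the claim that $\theta=N\zeta$ stays in the disc $\{|\theta|\le 2\}$, inside which $Q>0$. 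With the paper's cutoff, $\mathrm{supp}\,\eta\subset[-4\pi,4\pi]^2\setminus[-\pi,\pi]^2$, so $|\zeta|\ge\pi>2$ and for $N$ of order one the point $\theta=N\zeta=h\xi$ sweeps out essentially all of $[-\pi,\pi]^2$ with $|\theta|$ up to $\sqrt2\,\pi\approx 4.44$. This region contains the zero set of $Q$ that you yourself identify ($|\theta|=2\pi/3$ on the axes, $|\theta|=\pi/\sqrt2$ on the diagonal), and no renormalization of $\eta$ can avoid it: the Littlewood--Paley pieces must sum to $1$ on $\mathbb{T}^2_h$, so the finitely many largest $N$ necessarily carry the frequencies where $\det\mathrm{Hess}\,\Phi_N$ vanishes. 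At those scales the phase has degenerate critical points, the nondegenerate stationary-phase bound $|I|\lesssim\mu^{-1}$ is false, and the interpolation that feeds your final cancellation has nothing to interpolate with.

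This is precisely the difficulty the paper isolates: it treats the finitely many dyadic $N\in(N_0,1]$ separately, writing $K_{N,h}$ as a convolution of the $h=1$ kernel $G(\cdot,t/h^4)$ with $\mathcal{F}^{-1}\eta(\cdot/N)$ and invoking the dispersive estimate $\|G(\cdot,s)\|_{L^\infty}\lesssim|s|^{-1/2}$ of \cite{C24}, whose proof requires a dedicated analysis of exactly these degenerate critical points; only for $N\le N_0$ does it run a uniform nondegenerate argument. To repair your proposal you would need to add this (or an equivalent) separate treatment of the large-$N$ pieces, and, for the small-$N$ pieces, actually prove the deferred quantitative lemma $p\,Q\gtrsim|\theta|^4$ on the relevant disc together with the uniformity in the linear perturbation $z$ of the two-dimensional stationary-phase estimate. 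A further small point: for $N\in\{1/2,1\}$ the support of $\eta(h\xi/N)$ is not contained in $\mathbb{T}^2_h$, so the step ``enlarge the domain to $\R^2$'' also needs the large-$N$ case to be split off.
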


\begin{remark}
    The following are difficulties for the proof of the result.
    \begin{enumerate}
        \item The phase function associated with $K_{N,h}$ may have degenerate critical points. We observe that this phenomenon only happen when $N$ is away from 0, hence there are finitely many such dyadic numbers $N$. Then, we can overcome this difficulty by rescaling the phase to the standard one, i.e. $h=1$. As mentioned before, the dispersive estimate for $G$ when $h=1$ has been proved in \cite[Theorem 1.1]{C24}, where a careful analysis of degenerate critical points is needed. 
        \item For small $N$, although there are no degenerate critical points, we need to figure out the precise decay for infinitely many $N$. Inspired by \cite{CA24}, we take polar coordinates and apply the method of stationary phase to the inner integral, producing a one-variable integral consequently. We show that the second derivative of its phase never vanishes and then use van der Corput lemma. See Section \ref{ssec-complicated} for details. 
    \end{enumerate}
\end{remark}

As a corollary, we establish uniform Strichartz estimates.
\begin{theorem}\label{thm-strichartz}
    Let $h\in (0,1]$ and $d=2$. For any admissible pair $(q,r)$, i.e. $q,r\geq 2$ and $\frac{1}{q}=\frac{1}{2}\left(\frac{1}{2}-\frac{1}{r}\right)$, there exists $C=C_{q,r}>0$ independent of $h$, such that
    \begin{equation*}
        ||e^{it(-\Delta_h)^2}f||_{L^q_t L^r(h\Z^2)} \leq C||f||_{L^2(h\Z^2)}.
    \end{equation*}
\end{theorem}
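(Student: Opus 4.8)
The plan is to deduce the space-time bound from the pointwise dispersive estimate of Theorem \ref{thm-main} via the classical $TT^*$ method, and then to sum over dyadic frequency scales by Littlewood--Paley theory. Write $U(t)=e^{it(-\Delta_h)^2}$ and let $P_N$ be the Littlewood--Paley projection onto $\{|h\xi|\sim N\}$, so that $U(t)P_N$ has kernel $K_{N,h}(\cdot,t)$. Since $\mathbb{T}^2_h=\frac1h[-\pi,\pi]^2$ forces $|h\xi|\lesssim 1$, only the scales $N\in 2^{\Z}$ with $N\lesssim 1$ occur. For the infinitely many small scales $N\le 1$ the uniform bound $\|U(t)P_N\|_{L^1\to L^\infty}=\|K_{N,h}(\cdot,t)\|_{L^\infty}\lesssim |t|^{-1/2}$ is exactly Theorem \ref{thm-main}, while for the finitely many remaining scales $N\sim 1$ one rescales the phase to the case $h=1$ and invokes the dispersive estimate of \cite[Theorem 1.1]{C24}; crucially, both bounds are independent of $h$.

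The $TT^*$ argument requires the dispersive decay for \emph{every} $t\neq 0$, not merely for large $t$. Here the naive bound $\|U(t)P_N\|_{L^1\to L^\infty}\le\|K_{N,h}(\cdot,t)\|_{L^\infty}\lesssim (N/h)^2$ is useless as $h\to 0$, so the oscillatory gain must persist down to small times. Because the van der Corput lemma underlying Theorem \ref{thm-main} produces the factor $|t|^{-1/2}$ uniformly in the size of $t$, the bound $\|U(t)P_N\|_{L^1\to L^\infty}\lesssim |t|^{-1/2}$ extends to all $t\neq 0$ with an $h$-independent constant. Interpolating this with the unitarity $\|U(t)\|_{L^2\to L^2}=1$ gives, for $2\le r\le\infty$,
\[ \|U(t-s)P_N\|_{L^{r'}(h\Z^2)\to L^r(h\Z^2)}\lesssim |t-s|^{-(\frac12-\frac1r)}, \]
uniformly in $N$ and $h$.

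Next I would run the $TT^*$ argument at a single scale. With $Tf=U(t)P_Nf$, the homogeneous estimate $\|Tf\|_{L^q_tL^r}\lesssim\|f\|_{L^2}$ is equivalent to the boundedness of $TT^*F=\int_\R U(t-s)\wt{P}_N F(s)\,ds$ (with $\wt{P}_N$ a fixed projection onto $|h\xi|\sim N$) from $L^{q'}_tL^{r'}$ to $L^q_tL^r$. Inserting the interpolated dispersive bound and applying Minkowski's inequality in space reduces this to the one-dimensional estimate
\[ \Big\| \int_\R |t-s|^{-(\frac12-\frac1r)}\|F(s)\|_{L^{r'}}\,ds \Big\|_{L^q_t}\lesssim \big\|\,\|F(s)\|_{L^{r'}}\big\|_{L^{q'}_t}, \]
which is exactly the Hardy--Littlewood--Sobolev inequality under the relation $\frac2q=\frac12-\frac1r$, i.e. the admissibility condition $\frac1q=\frac12(\frac12-\frac1r)$. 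Since $r\in[2,\infty]$ forces $q\in[4,\infty]$, every admissible pair obeys $q\ge 4>2$, so we stay strictly away from the Keel--Tao endpoint and plain Hardy--Littlewood--Sobolev suffices. This yields the single-scale bound $\|U(t)P_Nf\|_{L^q_tL^r(h\Z^2)}\lesssim\|P_Nf\|_{L^2(h\Z^2)}$ with constant depending only on $(q,r)$.

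Finally I would sum over $N$ using the $h$-uniform Littlewood--Paley square-function inequality on $h\Z^2$ together with Minkowski's inequality, the latter available precisely because $q,r\ge 2$: for $2\le r<\infty$,
\[ \|U(t)f\|_{L^q_tL^r}\lesssim \Big\|\big(\sum_N|U(t)P_Nf|^2\big)^{1/2}\Big\|_{L^q_tL^r}\le\Big(\sum_N\|U(t)P_Nf\|_{L^q_tL^r}^2\Big)^{1/2}\lesssim\Big(\sum_N\|P_Nf\|_{L^2}^2\Big)^{1/2}\approx\|f\|_{L^2}, \]
which is the claim; the remaining endpoint $(q,r)=(4,\infty)$, where the square function is unavailable, requires a separate treatment. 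I expect the chief difficulty throughout to be the uniformity in $h$: one must verify that the short-time dispersive bound and the Littlewood--Paley/Mikhlin multiplier theory on the lattice $h\Z^2$ hold with constants independent of $h$, so that the $h$-blow-up of the naive scaling constants is genuinely circumvented — this being the whole reason for working with the frequency-localized kernel $K_{N,h}$ rather than with $G$ directly.
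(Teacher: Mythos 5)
Your proposal is correct and follows essentially the same route as the paper: the frequency-localized dispersive estimate of Theorem \ref{thm-main} combined with the trivial $L^2$ energy bound gives the single-scale Strichartz estimate, which is then summed over dyadic $N$ via Minkowski's inequality and the $h$-uniform Littlewood--Paley inequality (Lemma \ref{lem-PL}). The only differences are cosmetic: you unwind the abstract Keel--Tao theorem that the paper cites into the explicit non-endpoint $TT^*$/Hardy--Littlewood--Sobolev argument (legitimate here since admissibility forces $q\ge 4$), and you are more careful than the paper in noting that the dispersive bound must be extended from ``large $t$'' to all $t\neq 0$ (which indeed holds, since for $|t|N^4/h^4\lesssim 1$ the trivial bound $(N/h)^2$ is already $\lesssim |t|^{-1/2}$).
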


\begin{remark}
    ~
    \begin{enumerate}
        \item For fixed $h$, Strichartz estimates on the lattice $h\Z^2$ are established by the first author \cite[Theorem 1.3]{C24}. We emphasize that the independece of $h$ for the implicit constants in the above theorem, where the Paley-Littlewood inequality (c.f. Lemma \ref{lem-PL}) plays an important role, is essential for the applications. 
        \item When $d=1$, similar estimates can be obtained easily by van der Corput lemma. While for $d \geq 3$, it seems very hard to analyze oscillatory integrals with degenerate phase. Therefore, estimates in higher dimensions remain open.
    \end{enumerate}
\end{remark}
On the other hand, \eqref{equ-DSFS} can be regarded as a discretization of the following semilinear fourth-order Schr\"{o}dinger equation
\begin{equation}\label{equ-SFS}
     i \partial_t u + {\Delta}^2 u=\lambda |u|^{p-1}u
\end{equation}
on the Euclidean space $\R^d$. As a typical model in physics, this equation has been introduced by V. I. Karpman \cite{K96} and A. G. Shagalov \cite{KS00} to take into account the role of small fourth-order dispersion terms in the propagation of intense laser beams in a bulk medium with Kerr nonlinearity. Since then, there are many mathematical results for \eqref{equ-SFS}, including the Strichartz estimates, global well-posedness and scattering in different dimensions. See for instance B. Pausader \cite{P07,P09}, C. Miao et al. \cite{MXZ09,MXZ11}, C. Hao et al. \cite{HHW06}, B. Guo and B. Wang \cite{GW02}.

It is natural to consider the convergence results from the solution of the discrete equation \eqref{equ-DSFS} to that of the continuous equation \eqref{equ-SFS}, which will be useful for applications such as numerical analysis. Such convergence, usually referred as the continuum limit, has been investigated for other dispersive equations since the pioneering work of K. Kirkpatrick et al. \cite{KLS13}. Later in \cite{HY19}, Y. Hong and C. Yang proved the $L^2$ strong convergence for the Schr\"{o}dinger equation after establishing uniform Strichartz estimates. Their result was then extended to the fractional Schr\"{o}dinger equation in \cite{CA24}. Recently in \cite{CQ24}, Q. Chauleur proposed a different method to deal with Schr\"{o}dinger equation, by using the growth of Sobolev norms and the Shannon interpolation instead of the linear interpolation. His method works also for the Klein-Gordon equation \cite{CQ24-arXiv}.

Our second result is to prove the continuum limit of the DSFS and determine a precise rate of convergence. We will see that the corresponding uniform Strichartz estimate (Theorem \ref{thm-strichartz}) is essential when we compare the difference between the continuous solutions and discrete ones.

To begin with, we need the discretization operator and the linear interpolation operator $p_h$, see Section \ref{sec-continuum limit} for their definitions and properties, in order to transform functions defined on the lattice and on the Euclidean space into the same setting. With them, we prove the following result:

\begin{theorem}\label{thm-continuum limit}
    Assume that
    \begin{equation}\label{equ-p}
        \left\{
        \begin{aligned}
            & 1<p<\infty ~~& \mbox{when}~~\lambda>0, \\
            & 1<p<5 ~~& \mbox{when}~~\lambda<0.
        \end{aligned}
        \right.
    \end{equation}
    Given an initial data $u_0 \in H^2(\R^2)$, and let $u(t) \in C(\R,H^2(\R^2))$ be the global solution to \eqref{equ-SFS}. Moreover, let $u_{0,h}$ be the discretization of $u_0$ as defined in \eqref{equ-inter ope}, and denote by $u_h$ the global solution to \eqref{equ-DSFS} with the initial data $u_{0,h}$. Then there exist $A,B>0$ independent of $h$ such that 
    \begin{equation*}
        ||p_hu_h(t)-u(t)||_{L^2(\R^2)} 
        \leq A h^{\frac{2}{3}} e^{B|t|} (1+||u_0||_{H^2(\R^2)})^p.
    \end{equation*}
\end{theorem}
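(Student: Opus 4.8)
The plan is to compare the two flows through their Duhamel (integral) formulations and to close the resulting inequality with a Gronwall argument, using the uniform Strichartz estimate of Theorem~\ref{thm-strichartz} as the key analytic input that keeps all constants independent of $h$. Writing $S_h(t)=e^{it(-\Delta_h)^2}$ and $S(t)=e^{it\Delta^2}$ for the two linear propagators, the solutions satisfy
\begin{align*}
u_h(t) &= S_h(t)u_{0,h} - i\lambda\int_0^t S_h(t-s)\big(|u_h|^{p-1}u_h\big)(s)\,ds,\\
u(t) &= S(t)u_0 - i\lambda\int_0^t S(t-s)\big(|u|^{p-1}u\big)(s)\,ds.
\end{align*}
Applying $p_h$ to the first identity and subtracting, I would bound $\|p_hu_h(t)-u(t)\|_{L^2(\R^2)}$ by a linear term $\|p_hS_h(t)u_{0,h}-S(t)u_0\|_{L^2}$ plus a nonlinear Duhamel remainder. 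The preliminary step is to record the mapping properties of the discretization operator and of $p_h$ (Section~\ref{sec-continuum limit}): both should be bounded uniformly in $h$ on the relevant spaces, $p_h$ should intertwine the discrete and continuous Fourier transforms up to an explicit multiplier, and the composition $p_h\circ(\text{discretization})$ should approximate the identity on $H^2(\R^2)$ with an $O(h^2)$ error, so that these operator errors are subordinate to the main rate below.

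For the linear term I would pass to the Fourier side, where the discrete flow acts as multiplication by $e^{it\omega^4}$ on $\mathbb{T}^2_h$ and the continuous flow as multiplication by $e^{it|\xi|^4}$ on $\R^2$. The comparison splits at a frequency scale $R=R(h)$ to be optimized: on $\{|\xi|\le R\}$ I use the Taylor expansion $\omega(h,\xi)^4=|\xi|^4+O(h^2|\xi|^6)$ together with $|e^{ia}-e^{ib}|\le|a-b|$ to bound the phase discrepancy by $|t|\,h^2|\xi|^6$, which after spending two derivatives of the $H^2$ data costs $\lesssim |t|\,h^2R^4\|u_0\|_{H^2}$; on $\{|\xi|>R\}$ the contribution is a high-frequency tail bounded by $R^{-2}\|u_0\|_{H^2}$ (the Nyquist truncation at $\pi/h$ is harmless since $R\ll\pi/h$ for small $h$). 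Choosing $R=h^{-1/3}$ equalizes the two and yields
\begin{equation*}
\big\|p_hS_h(t)u_{0,h}-S(t)u_0\big\|_{L^2(\R^2)} \lesssim h^{2/3}(1+|t|)\,\|u_0\|_{H^2(\R^2)},
\end{equation*}
which is the source of the exponent $2/3$ in the statement.

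It then remains to control the nonlinear Duhamel remainder and to iterate. Here I would estimate the difference via the elementary inequality $\big||a|^{p-1}a-|b|^{p-1}b\big|\lesssim\big(|a|^{p-1}+|b|^{p-1}\big)|a-b|$, run it through an admissible Strichartz pair $(q,r)$ on a bounded time window, and use H\"older in space-time; the hypothesis~\eqref{equ-p} (with $p<5$ mass-subcritical in the focusing case $\lambda<0$, and unrestricted in the defocusing case $\lambda>0$) should guarantee that both $u$ and $u_h$ exist globally with norms bounded uniformly in $h$ in terms of $\|u_0\|_{H^2}$, furnishing the factor $(1+\|u_0\|_{H^2})^p$. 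Summing the linear error against the nonlinear contribution over consecutive short intervals and applying Gronwall's inequality should convert the per-step error into the global bound $Ah^{2/3}e^{B|t|}(1+\|u_0\|_{H^2})^p$. The main obstacle I anticipate is the consistent bookkeeping of $p_h$ and the discretization on the Fourier side—matching the discrete torus $\mathbb{T}^2_h$ with $\R^2$ and confirming that all operator errors are $O(h^2)$ and hence dominated by the symbol-comparison rate $h^{2/3}$—together with arranging the Gronwall step so that $A$ and $B$ stay genuinely independent of $h$, for which the uniform-in-$h$ Strichartz estimate of Theorem~\ref{thm-strichartz} is indispensable.
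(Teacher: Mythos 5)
Your overall architecture --- Duhamel formulas for both flows, a linear comparison term plus a nonlinear remainder, and Gronwall with the uniform-in-$h$ Strichartz estimate keeping the constants $h$-independent --- is exactly the paper's. Your treatment of the linear term is essentially the argument behind the estimate the paper imports from Hong--Yang (truncation at $R=h^{-1/3}$, Taylor expansion $\omega^4=|\xi|^4+O(h^2|\xi|^6)$ at low frequency, $H^2$ tail at high frequency), and you correctly locate the source of the rate $h^{2/3}$.

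The gap is in the nonlinear Duhamel remainder, which you treat as a single object controlled by $\bigl||a|^{p-1}a-|b|^{p-1}b\bigr|\lesssim(|a|^{p-1}+|b|^{p-1})|a-b|$. That inequality only handles the piece $\int_0^t S(t-s)\bigl(|p_hu_h|^{p-1}p_hu_h-|u|^{p-1}u\bigr)\,ds$, i.e.\ the term that feeds Gronwall. But the discrete integrand is $p_hS_h(t-s)(|u_h|^{p-1}u_h)$ while the continuous one is $S(t-s)(|u|^{p-1}u)$, so before Gronwall can see $\|p_hu_h-u\|_{L^2}$ you must insert two intermediate terms, each requiring a nontrivial estimate. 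First, the commutator $\bigl(p_hS_h(t-s)-S(t-s)p_h\bigr)(|u_h|^{p-1}u_h)$: this is the linear comparison again but now applied to the nonlinearity, so it needs a uniform-in-$h$ bound $\||\nabla|^2(|u_h|^{p-1}u_h)\|_{L^2(h\Z^2)}\lesssim\|u_h\|_{L^\infty}^{p-1}\|u_h\|_{H^2}$, i.e.\ a discrete fractional chain rule proved via the difference-quotient characterization of $\dot H^s(h\Z^d)$. Second, $S(t-s)\bigl(p_h(|u_h|^{p-1}u_h)-|p_hu_h|^{p-1}p_hu_h\bigr)$: the linear interpolation does not commute with the nonlinearity, and one needs the almost distribution law $\|p_h(|g|^{p-1}g)-|p_hg|^{p-1}p_hg\|_{L^2(\R^2)}\lesssim h^{2/3}\|g\|_{L^\infty}^{p-1}\|g\|_{H^2}$. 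Neither of these is subsumed by your ``$O(h^2)$ operator errors,'' and without them the inequality you hand to Gronwall does not close. Two smaller corrections: $\|p_hf_h-f\|_{L^2}$ is only $O(h)$ for $H^2$ data (still harmless against $h^{2/3}$); and the Gronwall factor in the paper comes not from space-time Strichartz pairs applied to the difference but from the uniform $L^\infty_tL^\infty_x$ bounds on $u$ and $u_h$ (persistence of $H^2$ regularity plus the two-dimensional Sobolev embedding, with Theorem \ref{thm-strichartz} entering precisely there), which is what makes $B$ depend only on $\|u_0\|_{H^2(\R^2)}$.
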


\begin{remark}
    ~
    \begin{enumerate}
        \item The global well-posedness of \eqref{equ-DSFS} and \eqref{equ-SFS} should be addressed before proving the convergence. In fact, global solutions to both equations exist under the condition of the theorem, see Section \ref{sec-NE}.
        %\item This Theorem gives a convergence rate when time is limited. However, we may not expect the long-time convergence since it has been known that even the solutions to linear dispersive disrete equations decay with different rates from those to continuous equations. 
        \item The range of the exponent $p$ comes from certain estimate for nonlinear equations, see the detailed proof in Section \ref{sec-NE}.
    \end{enumerate}
\end{remark}

This paper is orgnized as follows. In Section \ref{sec-backg} we recall basic notions such as function spaces on lattices and Paley-Littlewood operators. In Section \ref{sec-mainproof} we prove Theorem \ref{thm-main} and \ref{thm-strichartz}. In Section \ref{sec-NE} we prove some results on nonlinear equations, serving as the preparation for the continuum limit. Finally in Section \ref{sec-continuum limit}, we prove Theorem \ref{thm-continuum limit}.

\section{Preliminaries}\label{sec-backg}
\subsection{Notation}

We denote by $L^p(\R^d)$ ($L^p(h\Z^d)$ resp.) the Lebesgue space on the Euclidean space $\R^d$ (the lattice $h\Z^d$ resp.). We write the (homogeneous) Sobolev spaces on $\R^d$ and $h\Z^d$ in the similar way. $\mathcal{F}$ and $\mathcal{F}^{-1}$ are the Fourier transform on lattices and its inverse. The discrete Laplacian $\Delta_h$ has been defined in the introduction, while $\Delta$ is the standard operator on $\R^d$.

The positive constants $c$ and $C$ may vary from one line to another, but they do not depend on the stepsize $h$. During the proof, we usually omit them by using the inequality symbol $\lesssim$.  The Japanese bracket is given by $\langle t \rangle = (1+|t|^2)^{\frac{1}{2}}$.

\subsection{Basics on the discrete setting}
In this part we recall Lebesgue spaces on the lattice $h\Z^d$. The $L^p(h\Z^d)$ norm is defined by
\begin{equation*}
    ||f||_{L^p(h\Z^d)}=
    \left\{
        \begin{aligned}
            & \left\{h^d \sum_{x\in h\Z^d}|f(x)|^p\right\}^{1/p}, ~~~~~\mbox{if}~~~~~1\leq p < \infty;\\
            & \sup_{x\in h\Z^d}|f(x)|, ~~~~~\mbox{if}~~~~~p=\infty,
        \end{aligned}
    \right.
\end{equation*}
and consequently $L^p(h\Z^d) = \{f:h\Z^d \rightarrow \C, ||f||_{L^p(h\Z^d)} < \infty\}$. 

%On a lattice $\Z^d_h$, we have the inner product
%\begin{equation*}
%    \langle f,g \rangle:=h^d \sum_{x\in \Z^d_h} f(x)\overline{g(x)}
%\end{equation*}
The convolution is given by
\begin{equation*}
    (f*g)(x) :=h^d \sum_{y\in h\Z^d} f(x-y)g(y).
\end{equation*}

For a rapidly decreasing function $f$, that is, $|x|^k|f(x)|\leq C_k$ for any $k\in \N$, we define its Fourier transform by 
\begin{equation}
    \mathcal{F}(f)(\xi)=h^d \sum_{x\in h\Z^d} f(x)e^{-ix\cdot\xi},~\xi \in \mathbb{T}^d_h.
\end{equation}
The inverse transform of a smooth function $f:\mathbb{T}^d_h \rightarrow \C$ is given by 
\begin{equation*}
    \mathcal{F}^{-1}(f)(x) = \frac{1}{(2\pi)^d}\int_{{T}^d_h}f(\xi)e^{ix\cdot\xi}d\xi,~x \in h\Z^d.
\end{equation*}
Note that such transforms could be extended to more general functions. Similar to the continuous case, the basic properties of $L^p(h\Z^d)$ functions and the Fourier transform still hold such as the Plancherel theorem.

If $f$ is the Cauchy data of \eqref{equ-DSFS}, the evolution operator of linear flow $e^{it(-\Delta_h)^2}$ is defined as a Fourier multiplier (see \eqref{equ-fundamental solution} for the definition of  $G$)
\begin{equation*}
    \mathcal{F}(e^{it(-\Delta_h)^2}f)(\xi) = 
    \mathcal{F}(G)\mathcal{F}(f)(\xi)= e^{it\omega(\xi)^4}\mathcal{F}(f)(\xi).
\end{equation*}

%\begin{lemma}[properties of Fourier transform]
%    \begin{equation*}
%        \begin{aligned}
%            & \widehat{f*g}(\xi) = \hat{f}(\xi)\hat{g}(\xi)\\
%            & \frac{1}{(2\pi)^d}\int_{{T}^d_h}\hat{f}(\xi)\overline{\hat{g}(\xi)}d\xi=h^d\sum_{x\in h\Z^d} f(x)\overline{g(x)}.
%        \end{aligned}
%    \end{equation*}
%\end{lemma}

\subsection{Paley-Littlewood theory}\label{ssec-PL}

Now we recall the Paley-Littlewood decomposition and several important properties, whose proof can be found in \cite{HY19-DCDS}. We choose and fix a smooth and axisymmetric function $\varphi: \R^d \rightarrow [0,1]$ such that $\varphi(\xi) = 1$ on the square $[-1,1]^d$ but $\varphi(\xi) = 0$ on $\R^d \backslash [-2,2]^d$. For a dyadic number $N \in 2^{\Z}$, we denote
\begin{equation*}
    \psi_N(\xi) =\eta\left(\frac{h\xi}{N}\right) =\varphi \left(\frac{h\xi}{2\pi N}\right) - \varphi \left(\frac{h\xi}{\pi N}\right)
\end{equation*}
Then, we have
\begin{equation*}
    \mbox{supp} \psi_N \subset \left[-\frac{4\pi N}{h},\frac{4\pi N}{h}\right]^d \backslash \left[-\frac{\pi N}{h},\frac{\pi N}{h}\right]^d,\mbox{supp} ~\eta \subset \left[-4\pi,4\pi\right]^d \backslash \left[-\pi,\pi\right]^d,
\end{equation*}
and 
\begin{equation*}
    \sum_{2^{\Z} \ni N\leq 1} \psi_N =1.
\end{equation*}

The Paley-Littlewood operator $P_N$ is defined by a Fourier multipler,
\begin{equation*}
    \mathcal{F}(P_Nf)(\xi) = \psi_N(\xi)\mathcal{F}(f)(\xi).
\end{equation*}

%Such operators satisfy boundedness property.
%\begin{lemma}[Bernstein's inequality]
%    If $1\leq p \leq \infty$, then for any dyadic number $N \in 2^{\Z}$ and $h>0$, we have
%    \begin{equation*}
%        ||P_Nf||_{L^q_h} \lesssim \left(\frac{N}{h}\right)^{d\left(\frac{1}{p}-\frac{1}{q}\right)}||f||_{L^p_h}.
%    \end{equation*}
%\end{lemma}

The core result is the following inequality.
\begin{lemma}[Paley-Littlewood inequality]\label{lem-PL}
    Let $1<p<\infty$. For $f\in L^p(h\Z^d)$, there exist positive constants $c_p$ and $C_p$, independent of $h$, such that
    \begin{equation*}
        c_p||f||_{L^p(h\Z^d)} \leq \Big|\Big|\left(\sum_N|P_Nf|^2\right)^{\frac{1}{2}}\Big|\Big|_{L^p(h\Z^d)} \leq C_p||f||_{L^p(h\Z^d)}.
    \end{equation*}
\end{lemma}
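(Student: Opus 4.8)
The plan is to derive the square-function equivalence by combining Khintchine's inequality with a Mikhlin--Hörmander multiplier theorem, while extracting the $h$-independence from a single rescaling. Writing $\xi'=h\xi$, observe that each cutoff has the $h$-free form $\psi_N(\xi)=\eta(h\xi/N)$, so under the dilation $f\mapsto\tilde f$, $\tilde f(n):=f(hn)$ on $\Z^d$, the frequency domain becomes the fixed torus $[-\pi,\pi]^d$ and the multipliers become $\eta(\xi'/N)$, \emph{independent of $h$}. Since $\|f\|_{L^p(h\Z^d)}=h^{d/p}\|\tilde f\|_{L^p(\Z^d)}$ and the very same factor $h^{d/p}$ appears on both sides of the claimed inequality, it suffices to prove the estimate on $\Z^d$ with the $h$-free multipliers $\eta(\cdot/N)$; the constants produced there are automatically the universal $c_p,C_p$.

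First I would linearize the square function. By Khintchine's inequality, with $(\epsilon_N)$ independent Rademacher variables,
\[
\Big\|\big(\textstyle\sum_N |P_N f|^2\big)^{1/2}\Big\|_{L^p} \sim \Big(\mathbb{E}\,\big\|\textstyle\sum_N \epsilon_N P_N f\big\|_{L^p}^{p}\Big)^{1/p},
\]
so the upper bound reduces to a bound on the randomized multiplier operator $T_\epsilon$ with symbol $m_\epsilon(\xi')=\sum_N \epsilon_N\,\eta(\xi'/N)$, uniform in the signs $\epsilon$. (Equivalently one could run vector-valued Calderón--Zygmund theory directly on $f\mapsto(P_Nf)_N$ into $\ell^2$, bypassing the randomization.)

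Next I would verify that $m_\epsilon$ satisfies the Mikhlin--Hörmander derivative bounds on the torus. The pieces $\eta(\xi'/N)$ are supported in annuli $|\xi'|\sim N$ of bounded overlap whose only accumulation point is the frequency origin $\xi'=0$; away from the origin only the finitely many largest dyadic $N$ contribute, where $m_\epsilon$ is smooth. Near the origin the self-similar dyadic structure gives $|\partial^\alpha m_\epsilon(\xi')|\lesssim |\xi'|^{-|\alpha|}$ for all $|\alpha|\le \lfloor d/2\rfloor+1$, with constants depending only on $\eta$ and not on the signs or the number of terms. Hence the periodic Mikhlin multiplier theorem (or transference to the $\R^d$ Hörmander--Mikhlin theorem) yields $\|T_\epsilon\|_{L^p(\Z^d)\to L^p(\Z^d)}\le C_p$ uniformly, giving the upper bound. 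The lower bound then follows by the standard duality argument, using $\sum_N P_N=\mathrm{Id}$ (since $\sum_N\psi_N=1$) together with the already-proved upper bound for the conjugate exponent $p'$.

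The step I expect to be the main obstacle is establishing the multiplier theorem with genuinely $h$-free constants on the discrete torus. The rescaling above removes the explicit $h$, but one must still run Calderón--Zygmund theory on $\Z^d$ (or transfer cleanly to $\R^d$) for a symbol that is singular only at $\xi'=0$; controlling the kernel's Calderón--Zygmund bounds uniformly in $\epsilon$ is where the real work lies, whereas the Khintchine reduction and the duality for the lower bound are routine.
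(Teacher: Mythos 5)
Your argument is correct and follows the standard route. Note that the paper does not actually prove Lemma \ref{lem-PL} --- it only cites \cite{HY19-DCDS} --- and the proof there is essentially the one you outline: reduce to an $h$-free multiplier problem, linearize the square function via Khintchine's inequality, apply a Mikhlin--H\"ormander-type multiplier theorem on the lattice with constants uniform in $h$, and obtain the lower bound by duality from $\sum_N P_N=\mathrm{Id}$.
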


\subsection{Sobolev spaces}\label{ssec-sobo}

On the lattice $h\Z^d$, Sobolev spaces $H^s(h\Z^d)$ can be defined by the following equivalent ways. For this purpose, we recall that differential operators $|\nabla|^s$, $\langle \nabla\rangle ^s$ and $(-\Delta_h)^s$ can be defined as Fourier multipliers:
\begin{equation*}
    \begin{aligned}
        &\mathcal{F}(|\nabla|^sf)(\xi) = |\xi|^s \mathcal{F}(f)(\xi);\\
        &\mathcal{F}(\langle \nabla\rangle ^sf)(\xi) = (1+|\xi|^2)^{\frac{s}{2}} \mathcal{F}(f)(\xi);\\
        &\mathcal{F}((-\Delta_h)^sf)(\xi)=\left\{\sum_{j=1}^{d}\frac{4}{h^2}\sin^2\left(\frac{h\xi_j}{2}\right)\right\}^{\frac{s}{2}}\mathcal{F}(f)(\xi).
    \end{aligned}
\end{equation*}
Based on this, we define $H^s(h\Z^d)$ ($\dot{H}^s(h\Z^d)$ resp.) as the Banach space equipped with the norm 
\begin{equation*}
    ||f||_{H^s(h\Z^d)}:=||\langle \nabla \rangle^s f||_{L^2(h\Z^d)}~~~(||f||_{\dot{H}^s(h\Z^d)}:=|||\nabla |^s f||_{L^2(h\Z^d)} ~~\mbox{resp.}).
\end{equation*}
In fact, we have the following norm equivalence (c.f. \cite[Proposition 6]{HY19-DCDS})
\begin{equation*}
    ||f||_{H^s(h\Z^d)} \sim ||(1-\Delta_h)^{\frac{s}{2}}f||_{L^2(h\Z^d)},  ~~~\forall s \in \R,
\end{equation*}
and
\begin{equation*}
    ||f||_{\dot{H}^1(h\Z^d)} \sim \sum_{j=1}^{d}\Big|\Big|
    \frac{f(\cdot+he_j)-f(\cdot)}{h}\Big|\Big|_{L^2(h\Z^d)}.
\end{equation*}
Similar to $H^s(\R^d)$, a lower Sobolev norm is naturally bounded above by a higher one. 
%However, contrary to the whole space, differential operators on $h\Z^d$ are bounded, hence a high Sobolev norm is also bounded by a lower one, even though the implicit constant blows up as $h \rightarrow 0$.

Another useful result is the Gagliardo-Nirenberg-Sobolev inequality, see \cite[Proposition 5 and 7]{HY19-DCDS}. Here we only recall the special case $d=2$.
\begin{lemma}\label{lem-GNS}
    If $\frac{1}{q}=\frac{1}{2}-\frac{\theta s}{2}$ with $\theta \in (0,1)$, then
    \begin{equation*}
        ||f||_{L^q(h\Z^2)} \lesssim ||f||_{L^2(h\Z^2)}^{1-\theta}||f||^{\theta}_{\dot{H}^s(h\Z^2)}.
    \end{equation*}
    If $\frac{1}{q}=\frac{1-s}{2}$ and $q < \infty$, then
    \begin{equation*}
        ||f||_{L^q(h\Z^2)} \lesssim ||f||_{H^s(h\Z^2)}.
    \end{equation*}
    Moreover, if $q=\infty$ and $s>1$, we have
    \begin{equation*}
        ||f||_{L^{\infty}(h\Z^2)} \lesssim ||f||_{H^s(h\Z^2)}.
    \end{equation*}
\end{lemma}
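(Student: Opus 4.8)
The plan is to deduce all three inequalities from the Littlewood--Paley decomposition (Lemma \ref{lem-PL}) together with a single frequency-localized Bernstein inequality, uniform in $h$, exactly mirroring the continuum proof; the only genuinely lattice-specific point is to keep every constant independent of the stepsize. Two preliminary facts set this up. First, since every $\xi \in \mathbb{T}^2_h$ satisfies $|h\xi_j/2| \le \pi/2$, the elementary bound $\tfrac{2}{\pi}|\theta| \le |\sin\theta| \le |\theta|$ gives
\begin{equation*}
    \frac{4}{h^2}\sin^2\!\left(\frac{h\xi_j}{2}\right) \sim \xi_j^2, \qquad \xi \in \mathbb{T}^2_h,
\end{equation*}
uniformly in $h$, so by Plancherel the norms $\|f\|_{\dot H^s}$ and $\|(-\Delta_h)^{s/2} f\|_{L^2}$ (and their inhomogeneous counterparts) are equivalent uniformly in $h$; this lets me work throughout with the multipliers $|\xi|^s$ and $\langle\xi\rangle^s$. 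Second, I will use the uniform Bernstein inequality
\begin{equation*}
    \|P_N f\|_{L^q(h\Z^2)} \lesssim \left(\frac{N}{h}\right)^{2\left(\frac{1}{r}-\frac{1}{q}\right)} \|P_N f\|_{L^r(h\Z^2)}, \qquad r \le q,
\end{equation*}
which follows from the uniform-in-$h$ $L^1$ bounds on the rescaled Littlewood--Paley kernels (the kernel estimates of \cite{HY19-DCDS}); combined with $|\xi| \sim N/h$ and $\langle\xi\rangle \sim \langle N/h\rangle$ on $\operatorname{supp}\psi_N$, it yields $\|P_N f\|_{\dot H^s} \sim (N/h)^s \|P_N f\|_{L^2}$ and $\|P_N f\|_{H^s} \sim \langle N/h\rangle^s \|P_N f\|_{L^2}$.

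For the first (homogeneous, interpolation) inequality, with $\tfrac1q = \tfrac12 - \tfrac{\theta s}{2}$ I apply Bernstein with $r=2$ to each piece and rewrite it in the two equivalent forms $\|P_N f\|_{L^q} \lesssim (N/h)^{\theta s}\|P_N f\|_{L^2} \le (N/h)^{\theta s}\|f\|_{L^2}$ and, using the frequency localization, $\|P_N f\|_{L^q} \lesssim (N/h)^{-(1-\theta)s}\|P_N f\|_{\dot H^s} \le (N/h)^{-(1-\theta)s}\|f\|_{\dot H^s}$. Summing $\|f\|_{L^q} \le \sum_N \|P_N f\|_{L^q}$ and splitting the dyadic sum at a threshold $\tau \in 2^{\Z}$, the first form controls $N \le \tau$ and the second controls $N > \tau$; both are geometric series (the exponents $\theta s$ and $-(1-\theta)s$ have the right signs), giving $\|f\|_{L^q} \lesssim (\tau/h)^{\theta s}\|f\|_{L^2} + (\tau/h)^{-(1-\theta)s}\|f\|_{\dot H^s}$. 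Optimizing over $\tau$ balances the two terms at $(\tau/h)^s \sim \|f\|_{\dot H^s}/\|f\|_{L^2}$ and produces the desired bound $\|f\|_{L^2}^{1-\theta}\|f\|_{\dot H^s}^{\theta}$. A pleasant feature is that the lattice frequency cap $|\xi| \lesssim 1/h$ forces $\|f\|_{\dot H^s} \lesssim h^{-s}\|f\|_{L^2}$, so the optimal $\tau$ automatically lands in the admissible range $\tau \lesssim 1$; the boundary cases are handled by the monotonicity of the two sums.

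The remaining two inequalities are summation arguments of the same flavor but simpler. For $\tfrac1q = \tfrac{1-s}{2}$ with $q < \infty$ (hence $q>2$), I bound $\|f\|_{L^q} \lesssim \big(\sum_N \|P_N f\|_{L^q}^2\big)^{1/2}$ via Lemma \ref{lem-PL} and Minkowski, estimate $\|P_N f\|_{L^q} \lesssim (N/h)^s\|P_N f\|_{L^2} \le \langle N/h\rangle^s\|P_N f\|_{L^2} \sim \|P_N f\|_{H^s}$ by Bernstein, and conclude using $\sum_N \|P_N f\|_{H^s}^2 \lesssim \|f\|_{H^s}^2$, which holds because the $\psi_N$ have bounded overlap so $\sum_N \psi_N^2 \lesssim 1$. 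For $q=\infty$ with $s>1$, I use $\|f\|_{L^\infty} \le \sum_N \|P_N f\|_{L^\infty} \lesssim \sum_N (N/h)\|P_N f\|_{L^2}$ and apply Cauchy--Schwarz against the weight $\langle N/h\rangle^s$; the resulting factor $\sum_N (N/h)^2\langle N/h\rangle^{-2s}$ is comparable to $\sum_{u}\min(u^2,u^{2-2s})$ over the dyadic values $u=N/h$, which converges uniformly in $h$ precisely because $s>1$, while the other factor is $\lesssim \|f\|_{H^s}$.

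The main obstacle is the uniform Bernstein inequality: one must show that the convolution kernels of $P_N$, after rescaling the lattice $h\Z^2$ to $\Z^2$ (equivalently the frequency band to a fixed annulus), have $L^1$ norms bounded independently of both $N$ and $h$, so that Young's inequality upgrades $L^r \to L^q$ with the stated gain $(N/h)^{2(1/r-1/q)}$ and no hidden $h$-dependence. This is the one place where the discrete structure and the frequency cut-off $\mathbb{T}^2_h$ must be controlled carefully; once it is in hand, everything else is the classical Littlewood--Paley derivation of the Gagliardo--Nirenberg--Sobolev inequalities together with elementary geometric summation.
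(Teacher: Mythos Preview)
The paper does not prove this lemma at all: it simply records the statement as the $d=2$ special case of \cite[Propositions~5 and~7]{HY19-DCDS} and defers entirely to that reference. So there is no ``paper's own proof'' to compare against.

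Your proposal is a correct and standard derivation, and in spirit it is exactly the argument of the cited source: the uniform-in-$h$ Littlewood--Paley theory and the Bernstein inequality you invoke are precisely what \cite{HY19-DCDS} sets up, and the Gagliardo--Nirenberg and Sobolev inequalities are deduced there from those ingredients in the same way. Your identification of the uniform Bernstein inequality (via the $h$-independent $L^1$ bounds on the Littlewood--Paley kernels) as the only genuinely lattice-specific step is accurate, and once it is granted the dyadic splitting/optimization for the first inequality and the square-function/Cauchy--Schwarz arguments for the second and third are routine. In short, you have reconstructed the proof that the paper outsources.
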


\section{Proof of Theorem \ref{thm-main} and \ref{thm-strichartz}}\label{sec-mainproof}

Recall that $K_{N,h}$ is defined in \eqref{equ-K}. For each $x \in h\Z^d$, we write $x=hy$ where $y \in \Z^d$. Then
\begin{equation*}
    \begin{aligned}
        K_{N,h}(x)
        & =K_{N,h}(hy)\\
        & =\int_{\mathbb{T}^d_h} \exp \left\{ihy\cdot \xi +i\frac{t}{h^4} \left(\sum_{j=1}^{d}2-2\cos(h\xi_j)\right)^2\right\} \eta\left(\frac{h\xi}{N}\right)d\xi\\
        &=h^{-d} \int_{\mathbb{T}^d} \exp \left\{iy\cdot \xi +i\frac{t}{h^4} \left(\sum_{j=1}^{d}2-2\cos(\xi_j)\right)^2\right\} \eta\left(\frac{\xi}{N}\right)d\xi \\
        &=h^{-d} \left(\int_{\mathbb{T}^d} \exp \left\{iy\cdot \xi +i\frac{t}{h^4}\left(\sum_{j=1}^{d}2-2\cos(\xi_j)\right)^2\right\} d\xi\right) *_{y} \left(\int_{\mathbb{T}^d} e^{iy\cdot \xi} ~\eta\left(\frac{\xi}{N}\right)d\xi\right)\\
        &=h^{-d}~ G \left(y,\frac{t}{h^4}\right) *_y \left(\int_{\mathbb{T}^d} e^{iy\cdot \xi} ~\eta\left(\frac{\xi}{N}\right)d\xi\right).
    \end{aligned}
\end{equation*}
By Young's inequality, 
\begin{equation*}
    ||K_{N,h}(\cdot,t)||_{L^{\infty}(h\Z^d)} \leq h^{-d} \Big|\Big|G \left(\cdot,\frac{t}{h^4}\right)\Big|\Big|_{L^{\infty}(\Z^d)} \Big|\Big|\mathcal{F}^{-1}\left(\eta\left(\frac{\cdot}{N}\right)\right)\Big|\Big|_{L^1(\Z^d)}.
\end{equation*}

Now we divide the proof into cases for large or small $N \in 2^{\Z}$.
\subsection{Large N}
By the dispersive estimate \cite[Theorem 1.1]{C24}, we know that 
\begin{equation*}
    \Big|\Big|G \left(\cdot,\frac{t}{h^4}\right)\Big|\Big|_{L^{\infty}(h\Z^2)} \leq C \left(\frac{|t|}{h^4}\right)^{-\frac{1}{2}}.
\end{equation*}
And hence when $d=2$,
\begin{equation*}
    ||K_{N,h}(\cdot,t)||_{L^{\infty}(h\Z^d)} \lesssim h^{-2} \left(\frac{|t|}{h^4}\right)^{-\frac{1}{2}} \Big|\Big|\mathcal{F}^{-1}\left(\eta\left(\frac{\cdot}{N}\right)\right)\Big|\Big|_{L^1(\Z^2)} \leq C_N |t|^{-\frac{1}{2}}.
\end{equation*}
On the other hand, if we assume for now that $2^{\Z} \ni N \in (N_0,1)$ with $N_0$ to be determined later, we only need to deal with a finite number of $N$. Consequently, by adjusting the constant $C$ in Theorem \ref{thm-main} properly, we obtain the result.

\subsection{Small N}
In this part, we keep in mind that $N \leq N_0$ and $d=2$.

Letting $\xi \rightarrow \frac{N\xi}{h}$, we have 
\begin{equation*}
    K_{N,h}(x,t)  = \left(\frac{N}{h}\right)^2 \int_{[-\frac{\pi}{N},\frac{\pi}{N}]^2} \exp \left\{i\frac{xN}{h}\cdot \xi +i\frac{16t}{h^4}\left(\sin^2\left(\frac{N\xi_1}{2}\right)+\sin^2\left(\frac{N\xi_2}{2}\right)\right)^2\right\} \eta(\xi)d\xi.
\end{equation*} 
The support of $\eta$ is contained in $\{ \frac{\pi}{2} \leq |\xi_i| \leq 2\pi \}$. Therefore, we turn to consider a related integral, for $x \in \R^2$,
\begin{equation}\label{equ-I_N}
    I_{N}(x,t) = \int_{\R^2} \exp \left\{ix\cdot \xi +it\left(\sin^2\left(\frac{N\xi_1}{2}\right)+\sin^2\left(\frac{N\xi_2}{2}\right)\right)^2\right\} \eta(\xi)d\xi.
\end{equation}
Note that for small $N$ such that $\frac{\pi}{N} \geq 2\pi$,
\begin{equation*}
    K_{N,h}(x,t)=\left(\frac{N}{h}\right)^2 I_N\left(\frac{xN}{h},\frac{16t}{h^4}\right).
\end{equation*}
In order to prove Theorem \ref{thm-main}, it suffices to prove the following.
\begin{theorem}\label{thm-main 2}
    There exists a constant $C$, which is independent of $N$, $x$ and $t$, such that
    \begin{equation*}
        |I_N(x,t)| \leq C N^{-4}|t|^{-1}
    \end{equation*} 
    holds for large enough $t$, $2^{\Z} \ni N \leq N_0$, and $x\in \R^2$.
\end{theorem}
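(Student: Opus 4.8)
The plan is to strip the parameter $N$ out of the time variable by a rescaling, reducing the claim to a nondegenerate stationary phase estimate for a phase that is a uniformly small perturbation of the model biharmonic phase $\frac{1}{16}|\xi|^4$.

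\textbf{Step 1 (rescaling the time).} Write $g_N(\xi)=\bigl(\sin^2(N\xi_1/2)+\sin^2(N\xi_2/2)\bigr)^2$ and put $\tau=tN^4$. Since $t\,g_N(\xi)=\tau\,\widetilde g_N(\xi)$ with $\widetilde g_N:=N^{-4}g_N$, we have
\begin{equation*}
    I_N(x,t)=\int_{\R^2} e^{i\left(x\cdot\xi+\tau\,\widetilde g_N(\xi)\right)}\,\eta(\xi)\,d\xi .
\end{equation*}
On the compact annular set $\operatorname{supp}\eta$ one has $N|\xi|\lesssim N_0$, so expanding the sines shows that $\widetilde g_N\to\frac{1}{16}|\xi|^4$ as $N\to0$ together with all derivatives, uniformly on $\operatorname{supp}\eta$; in particular all $C^k(\operatorname{supp}\eta)$ norms of $\widetilde g_N$ are bounded uniformly for $N\le N_0$. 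The desired bound is now exactly $|I_N|\lesssim\tau^{-1}$, uniformly in $N\le N_0$ and in $x\in\R^2$. Because the trivial bound $|I_N|\le\|\eta\|_{L^1}$ already settles the case $\tau\le1$, it suffices to treat $\tau\ge1$.

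\textbf{Step 2 (uniform nondegeneracy).} A direct computation gives $\operatorname{Hess}g_N=N^4 M_N$ with entries built from $\sin(N\xi_i)$, $\cos(N\xi_i)$ and $G:=\sin^2(N\xi_1/2)+\sin^2(N\xi_2/2)$, and one finds
\begin{equation*}
    \det\operatorname{Hess}\widetilde g_N=N^{-8}\det\operatorname{Hess}g_N\;\longrightarrow\;\tfrac{3}{16}|\xi|^4\qquad(N\to0),
\end{equation*}
uniformly on $\operatorname{supp}\eta$. Since $|\xi|$ is bounded below there, for $N_0$ small enough we obtain $\det\operatorname{Hess}\widetilde g_N\ge c>0$ on $\operatorname{supp}\eta$ for all $N\le N_0$. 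The decisive observation is that the linear term $x\cdot\xi$ contributes nothing to the Hessian, so the full phase $\Phi(\xi)=\tau^{-1}x\cdot\xi+\widetilde g_N(\xi)$ has Hessian $\operatorname{Hess}\widetilde g_N$ for every $x$; its nondegeneracy is therefore uniform in $x$.

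\textbf{Step 3 (stationary phase / van der Corput).} With a uniform lower bound on $|\det\operatorname{Hess}\Phi|$ and uniform control of the higher derivatives of $\widetilde g_N$, a nondegenerate two-dimensional stationary phase estimate yields $|I_N|\lesssim\tau^{-1}$. To make the implied constant manifestly independent of $x$ and $N$, I would carry this out by the elementary iterated route suggested by the annular geometry: pass to polar coordinates $\xi=\rho\,\omega(\theta)$, apply the method of stationary phase to the inner integral to gain one factor $\tau^{-1/2}$ and reduce to a one-dimensional integral in the remaining variable, and then check that the second derivative of the resulting phase never vanishes, so that van der Corput's lemma supplies a second factor $\tau^{-1/2}$. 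The product gives $|I_N|\lesssim\tau^{-1}=N^{-4}|t|^{-1}$, as required.

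\textbf{Main obstacle.} The crux is combining the uniform nondegeneracy of Step 2 with uniformity of the stationary-phase constant in both $N$ and $x$. The rescaling in Step 1 tames the $N$-dependence, turning the oscillatory factor into a controlled perturbation of $|\xi|^4$ and bounding the derivatives that enter the stationary-phase remainder. The $x$-dependence is more delicate: as $|x|/\tau$ varies, the critical point of $\Phi$ moves through $\operatorname{supp}\eta$ and eventually leaves it (when $|x|\gg\tau$ there is no critical point and one even gets rapid decay), so the bound must survive the transition between the stationary and non-stationary regimes. The polar reduction is what handles this, because van der Corput's lemma needs only a lower bound on the second derivative of the reduced one-dimensional phase and is insensitive to its first-order (linear in $x$) part; the one genuinely computational point is then to verify that this reduced second derivative is bounded below uniformly, which is exactly where the ellipticity inherited from the model phase $\tfrac{1}{16}|\xi|^4$ is used.
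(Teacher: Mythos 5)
Your high-level strategy is the same as the paper's: normalize so that the target bound reads $\tau^{-1}$ with $\tau=tN^4$, view the phase as a uniformly controlled perturbation of the biharmonic model $\tfrac{1}{16}|\xi|^4$, pass to polar coordinates, apply stationary phase to the angular integral, and finish with van der Corput on the reduced radial phase after checking a uniform lower bound on its second derivative. The two difficulties you isolate (uniformity in $x$ across the stationary/non-stationary transition, and the lower bound on the reduced second derivative, which the paper obtains as $|S''(\rho)|\gtrsim N^4$ after computing $\partial\phi_+/\partial\rho$) are precisely where the paper spends its effort, so the outline is sound.

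There is, however, a concrete gap in Step 3 as written. If you pass to polar coordinates in $\xi$ directly, the inner angular integral has phase $r\rho\cos(\phi-\theta)+\tau\,\widetilde g_N(\rho\cos\phi,\rho\sin\phi)$, and $\widetilde g_N$ is \emph{not} radial: on $\operatorname{supp}\eta$ one has $\widetilde g_N=\tfrac{1}{16}\rho^4+O(N^2)$ where the $O(N^2)$ remainder depends on $\phi$. The angular integral therefore carries two independent large parameters, $r\rho$ and $\tau$, with no a priori relation between them, so "stationary phase in the inner integral with gain $\tau^{-1/2}$" is not yet meaningful: the gain from the angular oscillation is $(r\rho)^{-1/2}$, which matches $\tau^{-1/2}$ only in the regime $r\approx\tau$, while for $r\ll\tau$ the dominant angular oscillation comes from the $\tau\cdot O(N^2)$ term (already at $x=0$ the naive splitting loses a factor $\tau N^2$ when you differentiate the angular integral in $\rho$, and the resulting bound fails for $t\gg N^{-6}$). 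The paper removes this obstruction \emph{before} taking polar coordinates via the substitution $z_i=\tfrac{2}{N}\sin(N\xi_i/2)$, which makes the nonlinear part of the phase exactly $tN^4|z|^4$, hence genuinely radial; the angular phase is then exactly $r\rho\,\Phi(\phi)$ with the single parameter $r\rho$, at the cost of a smooth Jacobian and a more complicated $\Phi$ whose critical points $\phi_\pm$ must be shown nondegenerate uniformly in $N$ and $\rho$. With that device in place one still needs the case analysis you only allude to: bounded $r$ is handled by integration by parts in $\rho$ alone (giving $N^{-4}t^{-1}$ directly), and for large $r$ one uses that the reduced radial phase has critical points only when $r/t\approx N^4$, so that the product $r^{-1/2}(tN^4)^{-1/2}$ indeed collapses to $N^{-4}|t|^{-1}$. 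You would need to supply this substitution (or an equivalent way to decouple the two oscillatory parameters) for the proposed polar reduction to go through.
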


\begin{proof}[Proof of Theorem \ref{thm-main} assuming Theorem \ref{thm-main 2}]
    By the relation between $K_{N,h}$ and $I_N$, we know 
    \begin{equation*}
        ||K_{N,h}(\cdot,t)||_{L^{\infty}(h\Z^2)} \leq C\left(\frac{N}{h}\right)^2 \left(\frac{|t|}{h^4}\right)^{-1}N^{-4}=C N^{-2}h^2|t|^{-1}.
    \end{equation*}
    On the other hand, we have the trivial estimate
    \begin{equation*}
        ||K_{N,h}(\cdot,t)||_{L^{\infty}_{h}} \leq C\left(\frac{N}{h}\right)^2.
    \end{equation*}
    Interpolating with these two inequalities, we finish the proof.
\end{proof}

\subsection{Proof of Theorem \ref{thm-main 2}}\label{ssec-complicated}
%Recall that 
%\begin{equation*}
%    I_{N}(x,t) = \int_{\R^2} \exp \left\{ix\cdot \xi +it \left(\sin^2\left(\frac{N\xi_1}{2}\right)+\sin^2\left(\frac{N\xi_2}{2}\right)\right)^2\right\} \eta(\xi)d\xi.
%\end{equation*}
Beginning with \eqref{equ-I_N}, we let $z_i=\frac{2}{N}\sin\left(\frac{N\xi_i}{2}\right)$, and hence $\xi_i=\frac{2}{N}\arcsin\left(\frac{Nz_i}{2}\right)$. If $N$ is sufficiently small, this map is indeed injective. The Jacobian is 
\begin{equation*}
    J(z) = \left(\sqrt{1-\left(\frac{Nz_1}{2}\right)}\sqrt{1-\left(\frac{Nz_2}{2}\right)}\right)^{-1}.
\end{equation*}
By the support of $\eta$, $J$ is a smooth function. Then we have
\begin{equation*}
    \begin{aligned}
        I_N(x,t)
        &=\int_{\R^2} \exp \left\{ix\cdot \xi(z) +it\left( \left(\frac{Nz_1}{2}\right)^2+\left(\frac{Nz_2}{2}\right)^2\right)^2\right\} \eta(\xi(z))J(z)dz\\
        &=\int_{\R^2} \exp \left\{ix\cdot \xi(z) +itN^4|z|^4 \right\}\eta(\xi(z))J(z)dz.
    \end{aligned}
\end{equation*}

We then take polar coordinates, that is, $z=\rho e^{i\phi}$, and write $x=r e^{i\theta}$ with $0\leq r <\infty$ and $\theta \in [0,2\pi]$. Now we pay attention to the range of new variables. By $\frac{N\pi}{4} \leq \Big|\arcsin\left(\frac{Nz_1}{2}\right)\Big|\leq N\pi$ and an elementary inequality $a \leq \arcsin a \leq 2a$ (when $a \in [0,\frac{\sqrt{2}}{2}]$), we know $\frac{\pi}{4}\leq \rho \leq 2\pi$.

Hence, we write
\begin{equation*}
    \begin{aligned}
        I_N(x,t)
        &= \int_{\frac{\pi}{4}}^{2\pi} \int_{0} ^{2\pi} e^{ ir\rho \Phi(\phi)+itN^4\rho^4} \eta(\xi(\rho,\phi))J(\rho,\phi)\rho d\rho d\phi \\
        &=\int_{\frac{\pi}{4}}^{2\pi} e^{itN^4\rho^4} \rho d\rho \int_{0} ^{2\pi} e^{ir\rho \Phi(\phi)}\eta(\rho,\phi)J(\rho,\phi) d\phi \\
        & := \int_{\frac{\pi}{4}}^{2\pi} e^{itN^4\rho^4} \rho G(\rho,N,r,\theta) d\rho
    \end{aligned}
\end{equation*}
where the phase
\begin{equation*}
    \Phi(\phi)=\frac{2}{N\rho}\left[\cos \theta \arcsin\left(\frac{N\rho\cos\phi}{2}\right)+\sin \theta \arcsin\left(\frac{N\rho\sin\phi}{2}\right)\right]
\end{equation*}
%\begin{equation*}
%    \begin{aligned}
%        \Phi(\phi)
%        &=\cos \theta \cdot \xi_1 + \sin \theta \cdot \xi_2 \\
%        &=\frac{2}{N\rho}\left[\cos \theta \arcsin\left(\frac{Nz_1}{2}\right)+\sin \theta \arcsin\left(\frac{Nz_2}{2}\right)\right]\\
%        &=\frac{2}{N\rho}\left[\cos \theta \arcsin\left(\frac{N\rho\cos\phi}{2}\right)+\sin \theta \arcsin\left(\frac{N\rho\sin\phi}{2}\right)\right]
%    \end{aligned}
%\end{equation*}
and $G$ is a one-variable integral on $\phi$.

By integrating by parts, we get
\begin{equation*}
    \begin{aligned}
        I_N(x,t)
        &=-\frac{1}{4itN^4} \int_{\frac{\pi}{4}}^{2\pi}e^{itN^4\rho^4} \frac{d}{d\rho} (\rho^{-2} G(\rho,N,r,\theta)) d\rho\\
        &=-\frac{1}{4itN^4} \int_{\frac{\pi}{4}}^{2\pi}e^{itN^4\rho^4} \left(-2\rho^{-3}G+\rho^{-2}\frac{dG}{d\rho}\right)
         d\rho.
    \end{aligned}
\end{equation*}
Note that $\rho$ is bounded and $|G|\leq C(N_0)$ (see the formula of $J$ and recall that $N$ has an upper bound $N_0$). A harder part is 
\begin{equation*}
    \begin{aligned}
        \frac{dG}{d\rho}
        &=\int_{0} ^{2\pi} \frac{d}{d\rho} (e^{ir\rho \Phi(\phi)}) \eta J d\phi+\int_{0} ^{2\pi} e^{ir\rho \Phi(\phi)}\frac{d}{d\rho} (\eta J)\\
        &:= (E1) + (E2).
    \end{aligned}
\end{equation*}
For $(E2)$, a trivial estimate gives $(E2) \leq C(N_0)$. While for $(E1)$, the chain rule implies $\partial_{\rho}=\cos \phi \partial_{z_1}+\sin \phi \partial_{z_2}$, and a direct calculation gives $\partial_{z_i}e^{ix\cdot \xi(z)} \lesssim |x| = r$. Therefore, as long as $r$ has an upper bound, we have
\begin{equation*}
    |I_{N}(x,t)| \lesssim N^{-4} t^{-1}.
\end{equation*}

On the other hand, as $r \rightarrow \infty$, we turn to a careful analysis about $G$. Recall that 
\begin{equation*}
    G(\rho,N,r,\theta) = \int_{0} ^{2\pi} e^{ir\rho \Phi(\phi)}\eta(\rho,\phi)J(\rho,\phi) d\phi,
\end{equation*}
in which we regard $r,\rho$ as parameters. To understand this integral, we calculate
\begin{equation*}
    \Phi'(\phi) = -\frac{\cos\theta\sin\phi}{\sqrt{1-\left(\frac{N\rho\cos\phi}{2}\right)^2}}+\frac{\sin\theta\cos\phi}{\sqrt{1-\left(\frac{N\rho\sin\phi}{2}\right)^2}}.
\end{equation*}
Thus, the critical points are those $\phi$ such that
\begin{equation*}
    \tan \theta = \tan \phi \left(\frac{1-\left(\frac{N\rho\sin\phi}{2}\right)^2}{1-\left(\frac{N\rho\cos\phi}{2}\right)^2}\right)^{1/2} := \tan \phi ~g(\rho,\phi).
\end{equation*}
About $g(\rho,\cdot)$, we know  it is positive, even and $\pi$-periodic. Roughly speaking, its graph (between $-\frac{\pi}{2}$ and $\frac{\pi}{2}$) looks like a cosine function with a strictly positive boundary point. Moreover, noticing that $\tan \phi<g(\rho,\cdot)\tan \phi <1$ on $(0,\frac{\pi}{4})$, $g(\rho,\cdot)\tan \phi <\tan \phi$ on $(\frac{\pi}{4},\frac{\pi}{2})$, and $g(\rho,\cdot)\tan \phi$ is an odd $\pi$-periodic function, we obtain the following conclusion. If $\tan \theta \in [0,\infty]$, there always exist two critical points, denoted by $\phi_+ \in [0,\frac{\pi}{2}]$ and $\phi_{-} \in [\pi,\frac{3\pi}{2}]$ such that 
\begin{equation*}
    \Phi'(\phi_+)=\Phi'(\phi_-)=0~~~ \mbox{and}~~~ \phi_++\pi=\phi_-.
\end{equation*}
The case $\tan \theta \in [-\infty,0]$ is similar by symmetry. We remark here that for certain $\theta_{\pm}$ such that $\tan \theta_+ = \tan \theta_-$, we know that $0 \leq |\theta_{\pm}-\phi_{\pm}| \leq \frac{\pi}{4}$.

From this point, we deduce that the critical points here are non-degenerate. To see this, we calculate that
\begin{equation*}
    \Phi''(\phi) = -\left(1-\left(\frac{N\rho}{2}\right)^2\right)\left[\frac{\cos\theta\cos\phi}{\left(1-\left(\frac{N\rho\cos\phi}{2}\right)^2\right)^{\frac{3}{2}}}+\frac{\sin\theta\sin\phi}{\left(1-\left(\frac{N\rho\sin\phi}{2}\right)^2\right)^{\frac{3}{2}}}\right].
\end{equation*}
Since $\rho$ is bounded and $N$ is close enough to 0, we know 
\begin{equation*}
    |\Phi''(\phi_{\pm})| \approx |\cos (\theta-\phi_{\pm})| \gtrsim \cos \left(\frac{\pi}{4}\right) \gtrsim 1.
\end{equation*} 
Inspired by this, we take two cut-off functions, $\chi_{\pm}$. Let $\chi_{+}=1$ on the first quadrant and be supported in $(-\frac{\pi}{8},\frac{5\pi}{8})$. Let $\chi_{-}=1$ on the third quadrant and be supported in $(\frac{7\pi}{8},\frac{13\pi}{8})$. These two support sets are separated from each other and we define $\chi_0=1-\chi_{+}-\chi_{-}$. One may verify that the derivatives in $\phi$ of $\chi_{\pm}\eta J$ and $\chi_{0}\eta J$ are bounded, which is independent of $N,\rho$.

Now we write $G$ as the sum of the following three integrals.
\begin{equation*}
    \begin{aligned}
        G(t,r,\rho) = 
        &=\int_{0}^{2\pi} e^{ir\rho\Phi(\phi)}\eta J \chi_{+} d\phi + \int_{0}^{2\pi} e^{ir\rho\Phi(\phi)}\eta J \chi_{-} d\phi +\int_{0}^{2\pi} e^{ir\rho\Phi(\phi)}\eta J \chi_{0} d\phi\\
        &:=G_{+} + G_{-} + G_{0}
    \end{aligned}
\end{equation*}
We write $\lambda=r\rho$ as the parameter in $G$. Away from the critical points, we expect a rapid decay in $r\rho$ for $G_0$, so thet we claim (note that $\rho$ can be regarded as a fixed constant)
\begin{equation*}
    |\partial_{\lambda}G_0| \lesssim \lambda^{-1}.
\end{equation*}
This can be obtained from
\begin{equation*}
    \partial_{\lambda}G_0 = i \int_{0}^{2\pi}e^{i\lambda\Phi(\phi)}\Phi(\phi)\eta J \chi_{0} d\phi =
    -\frac{1}{\lambda}\int_{0}^{2\pi}e^{i\lambda\Phi(\phi)} \frac{d}{d\phi}\left(\frac{\Phi\eta J \chi_{0}}{\Phi'}\right)d\phi.
\end{equation*}
Since $\Phi$, $\Phi'$ and $\Phi''$ are bounded and nonvanishing, we prove the claim. Consequently, we know that the integral $I_N$ with $G$ replaced by $G_0$ admits an estimate consistently via integrating by parts, since
\begin{equation*}
    |\partial_{\rho}G_0| = r|\partial_{\lambda}G_0|\lesssim \frac{r}{\lambda} = \frac{1}{\rho}.
\end{equation*}

Now we deal with $G_+$, and $G_-$ can be treated in the same way. By the stationary phase formula (\cite[Chapter 8]{S93}), 
we write, as $\lambda \rightarrow \infty$,
\begin{equation*}
    G_+(\lambda) = \sqrt{\frac{2\pi}{|\partial^2_{\phi}\Phi(\phi_+)|}}e^{i[\lambda\Phi(\phi_+)-\frac{\pi}{4}]}\eta(\phi_+)J(\phi_+)\chi_+(\phi_+)\lambda^{-\frac{1}{2}}+\widetilde{G_+}(\lambda).
\end{equation*}
The error term $\widetilde{G_+}(\lambda)$ satisfies
\begin{equation*}
    \Big| \frac{d}{d\lambda} \widetilde{G_+}(\lambda) \Big| \lesssim \lambda^{-\frac{3}{2}}
\end{equation*}
as long as $\lambda$, and hence $r$ (note that $\rho$ has an upper bound), is large enough. As a result,
\begin{equation*}
    |\partial_{\rho}\widetilde{G_+}| \lesssim \frac{r}{\lambda^{\frac{3}{2}}} = \frac{1}{\rho^{\frac{3}{2}}r^{\frac{1}{2}}}.
\end{equation*}
By this, the integral $I_N$ with $G$ replaced by $\widetilde{G_+}$ admits again a similar estimate.

It remains to consider the case for which we insert the leading term of the asymptotic expansion of $G_+$ into $I_N$, given by $(\lambda = r \rho)$
\begin{equation*}
    \begin{aligned}
        \widetilde{I_N}    
        &= \int_{\frac{\pi}{4}}^{2\pi}e^{itN^4\rho^4+i\lambda\Phi(\phi_+)}\eta(\phi_+)J(\phi_+)\chi_+(\phi_+)\lambda^{-\frac{1}{2}}\rho|\partial^2_{\phi}\Phi(\phi_+)|^{-\frac{1}{2}} d\rho\\
        &=r^{-\frac{1}{2}}\int_{\frac{\pi}{4}}^{2\pi}e^{itN^4\rho^4+ir\rho\Phi(\phi_+)} \rho^{\frac{1}{2}}|\partial^2_{\phi}\Phi(\phi_+)|^{-\frac{1}{2}}\eta(\phi_+)J(\phi_+)\chi_+(\phi_+) d\rho.
    \end{aligned}
\end{equation*}
Note that the critical point $\phi_{+}$ depends on $\rho$. We then write $r=vt$, where $v \in \R^2$, so that
\begin{equation*}
    \begin{aligned}
        \widetilde{I_N}
        &=r^{-\frac{1}{2}}\int_{\frac{\pi}{4}}^{2\pi} e^{it(N^4\rho^4+v\rho\Phi(\phi_+(\rho)))}\rho^{\frac{1}{2}}|\partial^2_{\phi}\Phi(\phi_+)|^{-\frac{1}{2}}\eta(\phi_+)J(\phi_+)\chi_+(\phi_+) d\rho\\
        &:=r^{-\frac{1}{2}}\int_{\frac{\pi}{4}}^{2\pi}
        e^{itS(\rho)}a(\rho)d\rho.
    \end{aligned}
\end{equation*}
This is a one-variable oscillatory integral, which can be estimated by the van der Corput lemma. To apply it, we should calculate the derivatives of 
\begin{equation*}
    S(\rho)=N^4\rho^4+\frac{2v}{N}\left[\cos \theta \arcsin\left(\frac{N\rho\cos\phi_+}{2}\right)+\sin \theta \arcsin\left(\frac{N\rho\sin\phi_+}{2}\right)\right].
\end{equation*}
%\begin{equation*}
%    \begin{aligned}
%        S(\rho)
%        &=N^4\rho^4+v\rho\Phi(\phi_+(\rho))\\
%        &=N^4\rho^4+\frac{2v}{N}\left[\cos \theta \arcsin\left(\frac{N\rho\cos\phi_+}{2}\right)+\sin \theta \arcsin\left(\frac{N\rho\sin\phi_+}{2}\right)\right]
%    \end{aligned}
%\end{equation*}
Then, by $\Phi'(\phi_+)=0$,
\begin{equation*}
    S'(\rho) = 4N^4\rho^3+v\left[\frac{\cos\theta\cos\phi_+}{\left(1-\left(\frac{N\rho\cos\phi_+}{2}\right)^2\right)^{\frac{1}{2}}}+\frac{\sin\theta\sin\phi_+}{\left(1-\left(\frac{N\rho\sin\phi_+}{2}\right)^2\right)^{\frac{1}{2}}}\right].
\end{equation*}
Moreover, we first calculate by differentiating $\Phi'(\phi_+)=0$ that
\begin{equation}\label{equ-phi-rho}
    \frac{\partial\phi_+(\rho)}{\partial{\rho}}
    =
    \frac{4N^2\rho\sin(4\phi_+)}{(4-N^2\rho^2)(N^2\rho^2(1-\cos(4\phi_+))-16)}.
\end{equation}
With this formula, we have
\begin{equation}\label{equ-S''}
    \begin{aligned}
        S''(\rho)
        =&12N^4\rho^2+v\cdot \frac{N^2\rho}{4}\left[\frac{\cos\theta\cos^3\phi_+}{\left(1-\left(\frac{N\rho\cos\phi_+}{2}\right)^2\right)^{\frac{3}{2}}}+\frac{\sin\theta\sin^3\phi_+}{\left(1-\left(\frac{N\rho\sin\phi_+}{2}\right)^2\right)^{\frac{3}{2}}}\right] \\
        &+ v \left[\frac{-\cos\theta\sin\phi_+}{\left(1-\left(\frac{N\rho\cos\phi_+}{2}\right)^2\right)^{\frac{3}{2}}}+\frac{\sin\theta\cos\phi_+}{\left(1-\left(\frac{N\rho\sin\phi_+}{2}\right)^2\right)^{\frac{3}{2}}}\right]\frac{\partial\phi_+(\rho)}{\partial{\rho}}.
    \end{aligned}
\end{equation}
Next, we will use a tedious calculation to show that 
\begin{equation*}
    |S''(\rho)| \gtrsim N^4.    
\end{equation*}
By \eqref{equ-S''}, we easily get
\begin{equation*}
    \begin{aligned}
        &\left(S''(\rho)-12N^4\rho^2\right)\frac{1}{v}\\
        =&~\frac{N^2\rho}{4}\left[\frac{\cos\theta\cos^3\phi_+}{\left(1-\left(\frac{N\rho\cos\phi_+}{2}\right)^2\right)^{\frac{3}{2}}}+\frac{\sin\theta\sin^3\phi_+}{\left(1-\left(\frac{N\rho\sin\phi_+}{2}\right)^2\right)^{\frac{3}{2}}}\right]\\
        &+\left[\frac{-\cos\theta\sin\phi_+}{\left(1-\left(\frac{N\rho\cos\phi_+}{2}\right)^2\right)^{\frac{3}{2}}}+\frac{\sin\theta\cos\phi_+}{\left(1-\left(\frac{N\rho\sin\phi_+}{2}\right)^2\right)^{\frac{3}{2}}}\right]\frac{\partial\phi_+(\rho)}{\partial{\rho}}\\
        :=&~\mathbf{I}\times \frac{N^2\rho}{4}+ \mathbf{II}\times \frac{\partial\phi_+(\rho)}{\partial{\rho}}.
    \end{aligned}
\end{equation*}
We observe from the formula of $S'(\rho)$ that there are no critical points if $v\gg N^4$ or $v\ll N^4$. Therefore, we may suppose $v \approx N^4$. Since $\mathbf{I} \approx 1$, $\mathbf{II} \approx 1$ and $\frac{\partial\phi_+(\rho)}{\partial{\rho}} \lesssim N^2$ by \eqref{equ-phi-rho}, we deduce that
\begin{equation*}
    |S''(\rho)| \geq 12N^4\rho^2 - v N^2 \gtrsim 12\left(\frac{\pi}{4}\right)^2N^4-N^6 \gtrsim N^4 .  
\end{equation*}
By the van der Corput lemma (\cite[Chapter 8]{S93}) and $r=|x|=vt$,
\begin{equation*}
    \begin{aligned}
        |\widetilde{I_N}|
        &=r^{-\frac{1}{2}}
        \Bigg|\int_{\frac{\pi}{4}}^{2\pi}e^{itS(\rho)}a(\rho)d\rho \Bigg|\\
        &\leq C r^{-\frac{1}{2}} \left(tN^4\right)^{-\frac{1}{2}}\left(||a||_{L^{\infty}([\frac{\pi}{4},2\pi])}+||\partial_{\rho}a||_{L^{1}([\frac{\pi}{4},2\pi])}\right)\\
        &\lesssim (N^4t)^{-\frac{1}{2}}t^{-\frac{1}{2}}N^{-2} \lesssim t^{-1}N^{-4}.
    \end{aligned}
\end{equation*}

Combining all above estimates, we finally complete the proof.
%\begin{equation*}
%    \begin{aligned}
%        |I_N| \lesssim 
%        & ~(N^4t)^{-1} ~~(\mbox{small} ~r)\\
%        +& ~(N^4t)^{-1} ~~(\mbox{large} ~r~\mbox{with}~ G_0/\widetilde{G_+}/\widetilde{G_-})\\
%        +&~(N^4t)^{-1} ~~(\mbox{large} ~r~\mbox{with the leading term of}~ G_+/G_-)\\
%        \lesssim &~(N^4t)^{-1},
%    \end{aligned}
%\end{equation*}
%which finishes the proof.

\subsection{Proof of Theorem \ref{thm-strichartz}}\label{ssec-Stri}
A trivial estimate gives the following energy estimate
\begin{equation*}
    ||e^{it(-\Delta_h)^2}P_Nf||_{L^2(h\Z^2)} \leq ||P_Nf||_{L^2(h\Z^2)} \lesssim ||f||_{L^2(h\Z^2)}.
\end{equation*}
Together with Theorem \ref{thm-main}, it follows from \cite[Theorem 1.2]{KT98} that for an admissible pair $(q,r)$, the following frequency localized Strichartz estimate holds
\begin{equation}\label{equ-inpf1}
    ||e^{it(-\Delta_h)^2}P_Nf||_{L_t^qL^r(h\Z^2)} \leq ||f||_{L^2(h\Z^2)}.
\end{equation}
Now, we define a cutoff function $\widetilde{\eta}$ such that $\widetilde{\eta}=1$ on the support of $\eta$ in the Paley-Littlewood theory. Also, let $\widetilde{P_N}$ be the Fourier multipler of symbol $\widetilde{\eta}(\frac{h\xi}{N})$. We point out that all the results above are valid if we replace $P_N$ with $\widetilde{P_N}$, and $P_N=\widetilde{P_N}P_N$ in the sense of operators. Therefore, we have by the Minkowski inequality and Lemma \ref{lem-PL},
\begin{equation*}
    \begin{aligned}
        ||e^{it(-\Delta_h)^2}f||_{L_t^qL^r(h\Z^2)}
        &\lesssim \Bigg|\Bigg|\left\{\sum_{N\leq 1}|e^{it(-\Delta_h)^2}P_Nf|^2\right\}^{\frac{1}{2}}\Bigg|\Bigg|_{L_t^qL^r(h\Z^2)}\\
        &\lesssim \left\{\sum_{N\leq 1}||e^{it(-\Delta_h)^2}P_Nf||_{L_t^qL^r(h\Z^2)}^2\right\}^{\frac{1}{2}} = \left\{\sum_{N\leq 1}||e^{it(-\Delta_h)^2}\widetilde{P_N}P_Nf||_{L_t^qL^r(h\Z^2)}^2\right\}^{\frac{1}{2}}\\
        &\lesssim \left\{\sum_{N\leq 1}||P_Nf||_{L^2(h\Z^2)}^2\right\}^{\frac{1}{2}} \lesssim ||f||_{L^2(h\Z^2)}.
    \end{aligned}
\end{equation*}
We have used \eqref{equ-inpf1} in the third inequality. The proof is finished.

As a corollary, we prove that the solution to discrete linear fourth-order Schr\"{o}dinger flow satisfies a time-averaged uniform bound.
\begin{corollary}\label{cor-time-aver-disc}
    We have 
    \begin{equation*}
        ||e^{it(-\Delta_h)^2}f||_{L_t^{\infty}L^{\infty}(h\Z^2)}
        \leq C_0 ||f||_{H^2(h\Z^2)}
    \end{equation*}
\end{corollary}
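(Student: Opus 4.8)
The plan is to bypass the oscillatory-integral machinery entirely and derive the bound from two ingredients: the conservation of the $H^2(h\Z^2)$ norm along the linear flow, and the Sobolev embedding $H^2(h\Z^2)\hookrightarrow L^\infty(h\Z^2)$ recorded in the last part of Lemma \ref{lem-GNS}. In particular, neither the dispersive estimate of Theorem \ref{thm-main} nor the Strichartz estimate of Theorem \ref{thm-strichartz} is needed for this corollary.

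First I would record that the flow preserves the $H^2$ norm. Since $e^{it(-\Delta_h)^2}$ is the Fourier multiplier with symbol $e^{it\omega(\xi)^4}$, of modulus one, and $\langle\nabla\rangle^2$ is also a Fourier multiplier, the two operators commute. The Plancherel theorem on $\mathbb{T}^2_h$ then yields
\[
    \|e^{it(-\Delta_h)^2}f\|_{H^2(h\Z^2)}^2 = \frac{1}{(2\pi)^2}\int_{\mathbb{T}^2_h}(1+|\xi|^2)^2\,\big|e^{it\omega(\xi)^4}\big|^2\,|\mathcal{F}(f)(\xi)|^2\,d\xi = \|f\|_{H^2(h\Z^2)}^2
\]
for every $t\in\R$, with constant exactly $1$ and hence trivially independent of $h$ and of $t$.

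Second, I would apply the last statement of Lemma \ref{lem-GNS} with $q=\infty$ and $s=2$, which is admissible since $2>1$ in dimension $d=2$. For each fixed $t$ this gives
\[
    \|e^{it(-\Delta_h)^2}f\|_{L^\infty(h\Z^2)} \lesssim \|e^{it(-\Delta_h)^2}f\|_{H^2(h\Z^2)} = \|f\|_{H^2(h\Z^2)},
\]
where the implicit constant is the $h$-independent Sobolev constant of Lemma \ref{lem-GNS}. Taking the supremum over $t\in\R$ produces precisely $\|e^{it(-\Delta_h)^2}f\|_{L_t^\infty L^\infty(h\Z^2)}\le C_0\|f\|_{H^2(h\Z^2)}$, with $C_0$ that same constant.

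There is no substantial analytic obstacle here; the only points meriting care are the two places where $h$-independence must be tracked. The constant in the conservation identity is exactly $1$ and so is uniform for free, while the uniformity of the Sobolev embedding constant is already built into Lemma \ref{lem-GNS} (inherited from \cite[Proposition 7]{HY19-DCDS}). The bound therefore holds pointwise for all $t$, which is why it follows solely from unitarity and Sobolev embedding rather than from any space-time dispersive analysis.
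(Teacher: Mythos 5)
Your proof is correct and is essentially the same as the paper's: both reduce the claim to the Sobolev embedding $H^2(h\Z^2)\hookrightarrow L^\infty(h\Z^2)$ of Lemma \ref{lem-GNS} together with preservation of the $H^2(h\Z^2)$ norm under the linear flow. The only cosmetic difference is that the paper phrases the latter step as the $(q,r)=(\infty,2)$ case of Theorem \ref{thm-strichartz} (after commuting $(1-\Delta_h)$ through the flow), whereas you correctly observe that this endpoint is just unitarity of the multiplier $e^{it\omega(\xi)^4}$ and so needs none of the dispersive machinery.
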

\begin{proof}
    To see this, we recall the Sobolev embedding (Lemma \ref{lem-GNS}) which states that $W^{s,2}(h\Z^2) \hookrightarrow L^{\infty}(h\Z^2)$ as long as $s>1$. Then we have
    \begin{equation*}
        \begin{aligned}
            ||e^{it(-\Delta_h)^2}f||_{L_t^{\infty}L^{\infty}(h\Z^2)}
        &\leq C ||e^{it(-\Delta_h)^2}f||_{L_t^{\infty}W^{2,2}(h\Z^2)} 
        =C||e^{it(-\Delta_h)^2}(1-\Delta_h)f||_{L_t^{\infty}L^2(h\Z^2)}\\
        & \lesssim ||(1-\Delta_h)f||_{L^2(h\Z^2)}=C||f||_{H^2(h\Z^2)}.
        \end{aligned}
    \end{equation*}
    In the last inequality we choose $(q,p)=(\infty,2)$ as an admissible pair and apply Theorem \ref{thm-strichartz}.
\end{proof}

\section{nonlinear equations}\label{sec-NE}

Before proving the continuum limit, it is natural to settle the well-posedness of corresponding equations, which is the aim of this section. Let's begin with the global solution and conservation laws of the DSFS.
\begin{theorem}\label{thm-global of DSDF}
    Suppose that $d \in \N$ and $p>1$. For any initial data $f \in L^2(h\Z^d)$, there exists a unique global solution $u(t) \in C(\R,L^2(h\Z^d))$ to \eqref{equ-DSFS}. Besides, it conserves the mass $M(u) := ||u||_{L^2(h\Z^d)}^2$ and the energy
    $E(u):= \frac{1}{2}||u||_{\dot{H}^2(h\Z^d)}^2+\frac{\lambda}{p+1}||u||^{p+1}_{L^{p+1}(h\Z^d)}$. 

    Moreover, suppose that $d=2$, $p$ is given in \eqref{equ-p}, and the initial data $f \in H^2(h\Z^2)$. The solution above satisfies
    \begin{equation*}
        ||u||_{L_t^{\infty} ([-T,T],L^{\infty}(h\Z^2))} \lesssim  ||f||_{H^2(h\Z^2)} ~~\mbox{for any}~~T>0. 
    \end{equation*}
\end{theorem}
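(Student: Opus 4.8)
For the first part (global well-posedness in $L^2(h\Z^d)$), the key observation is that on the lattice $h\Z^d$ the nonlinearity $|u|^{p-1}u$ is locally Lipschitz on $L^2$. Indeed, by the embedding $L^2(h\Z^d) \hookrightarrow L^\infty(h\Z^d)$ (which holds on the lattice since $\|f\|_{L^\infty} \leq h^{-d/2}\|f\|_{L^2}$, the only $h$-dependent step, but here $h$ is fixed), bounded sets in $L^2$ map to bounded sets in $L^\infty$, and one controls $\||u|^{p-1}u - |v|^{p-1}v\|_{L^2}$ by $C(\|u\|_{L^\infty}^{p-1} + \|v\|_{L^\infty}^{p-1})\|u-v\|_{L^2}$. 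First I would set up the Duhamel formulation
\begin{equation*}
    u(t) = e^{it(-\Delta_h)^2}f - i\lambda \int_0^t e^{i(t-s)(-\Delta_h)^2} |u|^{p-1}u(s)\, ds,
\end{equation*}
and run a standard contraction-mapping argument on $C([-T,T], L^2(h\Z^d))$, using that $e^{it(-\Delta_h)^2}$ is a unitary group on $L^2$ (energy estimate). This yields a unique local solution on a time interval depending only on $\|f\|_{L^2}$. Conservation of mass follows by pairing the equation with $u$ and taking imaginary parts; conservation of energy follows by pairing with $\partial_t u$ and taking real parts — these are formal computations that are rigorous on the lattice because everything is an absolutely convergent sum. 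Since the mass is conserved, $\|u(t)\|_{L^2}$ stays bounded, so the local time of existence does not shrink, and the solution extends globally by iteration.

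For the second part (the uniform-in-time $L^\infty$ bound on a finite interval when $d=2$), I would apply the Duhamel formula together with Corollary \ref{cor-time-aver-disc}. Taking $L_t^\infty L^\infty$ norms and using $\|e^{it(-\Delta_h)^2} g\|_{L_t^\infty L^\infty} \lesssim \|g\|_{H^2}$ from the corollary, I get
\begin{equation*}
    \|u\|_{L_t^\infty([-T,T],L^\infty)} \lesssim \|f\|_{H^2} + |\lambda| \int_{-T}^{T} \big\| |u|^{p-1}u(s) \big\|_{H^2(h\Z^2)}\, ds.
\end{equation*}
The nonlinear term must be estimated in $H^2$: using the algebra/product structure of $H^2(h\Z^2)$ (again via $H^2 \hookrightarrow L^\infty$ from Lemma \ref{lem-GNS}, since $s=2>1$) and the fractional-Leibniz/chain-rule estimate for $|u|^{p-1}u$, one bounds $\| |u|^{p-1}u\|_{H^2} \lesssim \|u\|_{L^\infty}^{p-1}\|u\|_{H^2}$. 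This requires propagating the $H^2$ norm of $u(t)$, which I would control by a separate Gronwall argument: differentiating or applying $(1-\Delta_h)$ to Duhamel and using unitarity of the flow on $H^2$ gives $\|u(t)\|_{H^2} \lesssim \|f\|_{H^2} + \int_0^t \|u(s)\|_{L^\infty}^{p-1}\|u(s)\|_{H^2}\, ds$, whence Gronwall yields $\|u(t)\|_{H^2} \leq \|f\|_{H^2} e^{C|t|}$ provided $\|u\|_{L^\infty}$ is controlled. Feeding this back closes the estimate on $[-T,T]$.

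The main obstacle I anticipate is the role of the sign of $\lambda$ and the exponent range in \eqref{equ-p}. In the defocusing case $\lambda>0$ the energy gives a coercive a priori bound on $\|u\|_{\dot H^2}$, so the $H^2$ norm is controlled globally for all $p$; in the focusing case $\lambda<0$ one must use the Gagliardo--Nirenberg inequality (Lemma \ref{lem-GNS}) to dominate the negative potential term $\frac{\lambda}{p+1}\|u\|_{L^{p+1}}^{p+1}$ by a fraction of $\frac12\|u\|_{\dot H^2}^2$, and this interpolation only closes when $p<5$ (the mass-subcritical/energy-subcritical threshold in dimension two for the bi-Laplacian), exactly the restriction appearing in \eqref{equ-p}. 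Getting the constants in this coercivity estimate to be uniform and correctly tracking which norm is conserved versus merely bounded is the delicate step; the rest is routine once the a priori $H^2$ bound is in hand.
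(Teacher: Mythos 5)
Your proposal is correct and follows essentially the same route as the paper: a Duhamel/contraction argument for local well-posedness, the conservation laws, the uniform $L^\infty$ control of the linear flow from Corollary \ref{cor-time-aver-disc}, the nonlinear estimate $\| |u|^{p-1}u \|_{H^2(h\Z^2)} \lesssim \|u\|_{L^\infty(h\Z^2)}^{p-1}\|u\|_{H^2(h\Z^2)}$, and the energy/Gagliardo--Nirenberg a priori bound that forces $p<5$ when $\lambda<0$. The only cosmetic difference is that the paper sidesteps the $L^\infty$--$H^2$ circularity you flag by running the contraction on a space $X$ whose definition controls both norms simultaneously, whereas you break the circle at the end via the conservation-law a priori $H^2$ bound; both are standard and work.
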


\begin{proof}
    %The proof is parallel to that of \cite[Section 4]{HY19} and we point out the main steps.
    The proof of the $L^2$ global well-posedness is standard, as is shown in \cite[Proposition 4.1]{HY19}. We focus on the second assertion. We choose a sufficiently small interval $I=[-\tau,\tau]$ (with $\tau$ to be chosen later) and define
    a map
    \begin{equation}
        \Gamma(u):= e^{-it(-\Delta_h)^2}f-i\lambda\int_0^te^{-i(t-s)(-\Delta_h)^2}(|u|^{p-1}u)(s)ds
    \end{equation} 
    on a complete metric space
    \begin{equation*}
        X:=\left\{
            u:h\Z^2 \rightarrow \C: ||u||_{C_t(I;H^2(h\Z^2))}+||u||_{L_t^{\infty}(I;L^{\infty}(h\Z^d))} \leq 2(1+C_0)||f||_{H^2(h\Z^d)}
        \right\}
    \end{equation*}
    with the norm $||\cdot||_{C_t(I;L^2(h\Z^d))}$. Here $C_0$ is the constant in Corollary \ref{cor-time-aver-disc}. Consequently, we can prove that $\Gamma$ is contractive on $X$, under the condition $1<p<\infty$, as in \cite[Proposition 4.2]{HY19}.

    To extend the time interval, we need to show $||u(t)||_{H^2(h\Z^2)}$, or $||(-\Delta_h)u(t)||_{L^2(h\Z^2)}$ by the norm equivalence and the mass conservation law, is bounded globally in time. If $\lambda>0$, the energy conservation yields the result. If $\lambda<0$, we have (again by Lemma \ref{lem-GNS} with $\theta = \frac{1}{2}-\frac{1}{p+1}$)
    \begin{equation*}
        \frac{1}{2}||(-\Delta_h)u(t)||_{L^2(h\Z^2)}^2 \leq E(f)+CM(f) ||(-\Delta_h)u(t)||_{L^2(h\Z^2)}^{\frac{p-1}{2}},
    \end{equation*}
    implying $||(-\Delta_h)u(t)||_{L^2(h\Z^2)}<\infty$ as long as $\frac{p-1}{2}<2$, i.e. $p<5$.
\end{proof}

For the semilinear fourth-order Schr\"{o}dinger equation on $\R^d$, the basic theory has been established.
\begin{theorem}\label{thm-global of SDF}
    Suppose that $d \in \N$ and $p>1$. For any initial data $f \in H^2(\R^d)$, there exists a unique global solution $u(t) \in C(\R,H^2(\R^d))$ to \eqref{equ-SFS}. Besides, it conserves the mass $M(u) := ||u||_{L^2(\R^2)}^2$ and the energy
    $E(u):= \frac{1}{2}||u||_{\dot{H}^2(\R^2)}^2+\frac{\lambda}{p+1}||u||^{p+1}_{L^{p+1}(\R^2)}$. 

    Moreover, when $d=2$ and $p$ is as in \eqref{equ-p}, the solution above satisfies
    \begin{equation*}
        ||u||_{L_t^{\infty} ([-T,T],L^{\infty}(\R^2))} \lesssim ||f||_{H^2(\R^2)} ~~\mbox{for any}~~T>0. 
    \end{equation*}
\end{theorem}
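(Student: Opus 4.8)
The plan is to mirror the proof of the discrete Theorem \ref{thm-global of DSDF}, now invoking the classical dispersive theory of \eqref{equ-SFS} on $\R^d$. The local well-posedness in $H^2(\R^d)$ together with the conservation of mass and energy are by now standard: the fourth-order Strichartz estimates on the continuum, combined with a contraction mapping argument on a ball of $C([-\tau,\tau];H^2(\R^d))$ intersected with the relevant Strichartz space, produce a unique local solution; see B. Pausader \cite{P07,P09} and C. Miao et al. \cite{MXZ09,MXZ11}. I would only recall the Duhamel formulation and set up the fixed point exactly as for $\Gamma$ in the discrete case, estimating the nonlinearity $|u|^{p-1}u$ by H\"{o}lder together with the continuum Gagliardo-Nirenberg inequality. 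Conservation of $M(u)$ and $E(u)$ then follows from the Hamiltonian structure of \eqref{equ-SFS} by the usual approximation argument.

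The crux is the global extension, which I would obtain from an a priori bound on $||u(t)||_{H^2(\R^2)}$. By the norm equivalence and mass conservation it suffices to control $||u(t)||_{\dot H^2(\R^2)}$. When $\lambda>0$, energy conservation gives $\tfrac12||u(t)||_{\dot H^2}^2 \le E(f)$ immediately. When $\lambda<0$, I would write $\tfrac12||u(t)||_{\dot H^2}^2 = E(f) + \tfrac{|\lambda|}{p+1}||u(t)||_{L^{p+1}}^{p+1}$ and interpolate the potential term by the Gagliardo-Nirenberg inequality on $\R^2$ (the continuum analog of Lemma \ref{lem-GNS}) with $\theta = \tfrac12-\tfrac1{p+1}$, so that $||u(t)||_{L^{p+1}}^{p+1} \lesssim ||u(t)||_{L^2}^{(1-\theta)(p+1)}||u(t)||_{\dot H^2}^{\frac{p-1}{2}}$, yielding
\[
    \tfrac{1}{2}||u(t)||_{\dot H^2(\R^2)}^2 \le E(f) + C\, M(f)^{\frac{(1-\theta)(p+1)}{2}}\, ||u(t)||_{\dot H^2(\R^2)}^{\frac{p-1}{2}}.
\]
Since the exponent $\tfrac{p-1}{2}<2$ precisely when $p<5$, the nonlinear term is absorbed into the left side and $||u(t)||_{\dot H^2}$ remains bounded on every time interval; this is exactly where the restriction \eqref{equ-p} enters. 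Iterating the local theory with this a priori bound then upgrades the local solution to a global one in $C(\R,H^2(\R^2))$.

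For the final $L^{\infty}_t L^{\infty}_x$ estimate in dimension $d=2$, I would invoke the Sobolev embedding $H^s(\R^2)\hookrightarrow L^{\infty}(\R^2)$ for $s>1$, the continuum counterpart of the step used in Corollary \ref{cor-time-aver-disc}, to get $||u(t)||_{L^{\infty}(\R^2)}\lesssim ||u(t)||_{H^2(\R^2)}$ uniformly in $t$, and then combine it with the global $H^2$ bound just obtained to conclude $||u||_{L_t^{\infty}([-T,T],L^{\infty}(\R^2))}\lesssim ||f||_{H^2(\R^2)}$ for every $T>0$.

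The main obstacle is the focusing case $\lambda<0$: everything hinges on closing the energy estimate, and the sharp threshold $p<5$ is forced by the Gagliardo-Nirenberg exponent $\tfrac{p-1}{2}$ having to stay below the $\dot H^2$ scaling $2$. The remaining ingredients, namely the local theory, the conservation laws, and the Sobolev embedding, are classical and transfer verbatim from the discrete argument, with the continuum Strichartz estimates playing the role that Theorem \ref{thm-strichartz} plays on the lattice.
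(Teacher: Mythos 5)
Your proposal is correct and follows essentially the same route as the paper: local theory plus conservation laws from the classical references, energy conservation for $\lambda>0$, Gagliardo--Nirenberg absorption with exponent $\frac{p-1}{2}<2$ for $\lambda<0$, and then a global $H^2$ bound upgraded to $L^\infty_t L^\infty_x$. The only (immaterial) difference is in the last step, where you invoke the Sobolev embedding $H^s(\R^2)\hookrightarrow L^\infty(\R^2)$ for $s>1$ while the paper uses the Gagliardo--Nirenberg interpolation $||u||_{L^{\infty}(\R^2)} \lesssim ||u||_{\dot{H}^2(\R^2)}^{1/2}||u||_{L^2(\R^2)}^{1/2}$; in $d=2$ these give the same conclusion.
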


\begin{proof}
    The proof is parallel to that of \cite[Appendix A]{HY19}. The global well-posedness has been established in \cite[Proposition 4.1]{P07}. The Strichartz estimate plays an important role and implies for some sufficiently small $\tau>0$
    \begin{equation*}
        ||u||_{L^{\infty}_t((-\tau,\tau),H^2(\R^2))} \lesssim ||f||_{H^2(\R^2)}.
    \end{equation*}
    Consequently, the choice of $p$ and $\lambda$ ensures that $||u(t)||_{H^2(\R^2)}$ is bounded by a constant depending only on $M(f)$ and $E(f)$, hence the interval can be extended arbitrarily to get 
    \begin{equation*}
        ||u||_{L^{\infty}_t((-T,T),H^2(\R^2))} \lesssim ||f||_{H^2(\R^2)}.
    \end{equation*}
    Based on the Gagliardo-Nirenberg inequality, see e.g, \cite[Theorem 1.1]{FFR21}, we have
    \begin{equation*}
        ||u||_{L^{\infty}(\R^2)} \lesssim ||u||_{\dot{H}^2(\R^2)}^{\frac{1}{2}} ||u||_{L^2(\R^2)}^{\frac{1}{2}} \lesssim ||u||_{H^2(\R^2)}.
    \end{equation*}
    Combining two inequalities above, we get
    \begin{equation*}
        ||u||_{L_t^{\infty} ([-T,T],L^{\infty}(\R^2))} \lesssim ||u||_{L_t^{\infty} ([-T,T],H^2(\R^2))} \lesssim
        ||f||_{H^2(\R^2)}.
    \end{equation*}
\end{proof}

\section{continuum limit}\label{sec-continuum limit}
Finally we are ready to prove the continuum limit. To begin with, we extend some properties of discretization and linear interpolation. In \cite{HY19}, Y. Hong and C. Yang established such properties mainly on $H^{\alpha}$ when $\alpha \in [0,1]$. In our case, we should focus on $H^2$.

For $f:\R^d \rightarrow \C$, it can be discretized by 
\begin{equation}\label{equ-disc ope}
    f_h(y):=\frac{1}{h^d}\int_{y+[0,h)^d}f(z)dz,~~\forall y \in h\Z^d.
\end{equation}
While for $g:h\Z^d \rightarrow \C$, a linear interpolation 
\begin{equation}\label{equ-inter ope}
    p_h(g) (z):=g(y)+(D_h^+g)(y)\cdot (z-y), ~~\forall z \in y + [0,h)^d,
\end{equation}
makes it continuous on the entire $\R^d$, where the $j$th component of the vector $(D_h^+g)(y)$ is $h^{-1}(g(y+he_j)-g(y))$.

\begin{lemma}\label{lem-bddness}
    Suppose that $f \in H^2(\R^d)$ and $g \in H^2(h\Z^d)$, then
    \begin{enumerate}
        \item $||f_h||_{\dot{H}^2(h\Z^d)} \lesssim ||f||_{\dot{H}^2(\R^d)}$, and hence $||f_h||_{H^2(h\Z^d)} \lesssim ||f||_{H^2(\R^d)}$.
        \item $||p_hg||_{\dot{H}^2(\R^d)} \lesssim ||g||_{\dot{H}^2(h\Z^d)}$, and hence $||p_hg||_{H^2(\R^d)} \lesssim ||g||_{H^2(h\Z^d)}$.
        \item $||p_hf_h-f||_{L^2(\R^d)} \lesssim h ||f||_{H^2(\R^d)}$.
    \end{enumerate}
\end{lemma}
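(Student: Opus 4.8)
The plan is to move between the lattice and the continuum through two explicit representations: the averaging representation $f_h(y)=h^{-d}\int_{[0,h)^d}f(y+w)\,dw$ of the discretization, and the (tent-type) reconstruction of $p_h$, which on the Fourier side amounts to multiplication by a product of $\mathrm{sinc}$-type factors against the $\frac{2\pi}{h}$-periodic symbol $\mathcal F(g)$. Parts (1) and (3) are cleanest in physical space, since each reduces to comparing a cell-average of a derivative of $f$ with a finite difference; part (2) is the delicate one. Throughout I will use the elementary symbol comparison $\frac{4}{\pi^2}\xi_j^2\le \frac{4}{h^2}\sin^2(h\xi_j/2)\le \xi_j^2$ for $|\xi_j|\le \pi/h$, which shows that $\|\cdot\|_{\dot H^2(h\Z^d)}$ is equivalent, uniformly in $h$, to $\|(-\Delta_h)\cdot\|_{L^2(h\Z^d)}$.

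\textbf{Part (1).} I would first record the exact identity
\begin{equation*}
 g(x+he_j)-2g(x)+g(x-he_j)=\int_{-h}^{h}(h-|s|)\,\partial_j^2 g(x+se_j)\,ds,
\end{equation*}
whose weight satisfies $h^{-2}\int_{-h}^h(h-|s|)\,ds=1$. Since the second difference commutes with the cell-average defining $f_h$, applying this with $g=f$ inside the average expresses the normalized discrete second difference $h^{-2}\big(f_h(y+he_j)-2f_h(y)+f_h(y-he_j)\big)$ as an average of $\partial_j^2 f$ against a probability measure supported in an $O(h)$ neighbourhood of $y$. Jensen's inequality, then summing $h^d\sum_y$ and using that the cubes $y+[0,h)^d$ tile $\R^d$ (with translation invariance in the $s$-variable), gives $\|h^{-2}\delta_j^2 f_h\|_{L^2(h\Z^d)}\le \|\partial_j^2 f\|_{L^2(\R^d)}$. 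Summing in $j$ yields $\|(-\Delta_h)f_h\|_{L^2(h\Z^d)}\lesssim \|f\|_{\dot H^2(\R^d)}$, and the symbol equivalence upgrades this to the stated $\dot H^2$ bound; the trivial $\|f_h\|_{L^2(h\Z^d)}\le\|f\|_{L^2(\R^d)}$ (Jensen plus tiling) then gives the $H^2$ bound.

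\textbf{Part (3).} On the Fourier side $\mathcal F(f_h)(\xi)=\sum_{k\in\frac{2\pi}{h}\Z^d}\hat f(\xi+k)\hat\psi(\xi+k)$, where $\hat\psi$ is the box multiplier carrying a phase $\prod_j e^{ih\xi_j/2}\mathrm{sinc}(h\xi_j/2)$ from the cell average, and the reconstruction multiplies by $\prod_j\mathrm{sinc}^2(h\xi_j/2)$. I would split $\|p_hf_h-f\|_{L^2}^2$ into the principal frequency cell $k=0$ and the aliasing cells $k\neq0$. On $k=0$ the total multiplier deviates from $1$ by $O(h|\xi|)$, the linear order coming precisely from the half-cell shift between the cell average (effectively centred at $y+\tfrac h2\mathbf 1$) and the interpolation node at $y$; this contributes $\lesssim h^2\|f\|_{\dot H^1}^2$. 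The aliasing cells are controlled by the high-frequency content of $f$ and contribute at the same or better order. Hence $\|p_hf_h-f\|_{L^2(\R^d)}\lesssim h\|f\|_{H^1(\R^d)}\le h\|f\|_{H^2(\R^d)}$, the asserted first-order rate.

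\textbf{Part (2), and the main obstacle.} This is where I expect all the difficulty to lie, and it is genuinely more than the $H^\alpha$, $\alpha\le1$, theory of \cite{HY19}. The natural route is to relate the distributional Hessian of $p_hg$ to the discrete second differences of $g$: on each cube $p_hg$ is affine, so $\nabla^2 p_hg$ is carried by the cube faces with face-weights equal to the jumps of the finite-difference gradient, which are of size $h^{-1}\delta^2 g$. The crux — and the obstacle — is that these surface contributions are not, on their own, an $L^2(\R^d)$ function: in Fourier terms the second-order symbol $|\xi|^2\prod_j\mathrm{sinc}^2(h\xi_j/2)$ is only bounded and $\frac{2\pi}{h}$-periodic rather than decaying, so the full-line $\dot H^2$ integral cannot be estimated cell-by-cell in $\xi$ against a single copy of $\mathcal F(g)$. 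The way I would push the bound through is to measure the left-hand side through the matching discrete weight — i.e.\ reduce $\|p_hg\|_{\dot H^2(\R^d)}$ to $\big\|\big(\sum_j\tfrac4{h^2}\sin^2(h\xi_j/2)\big)\mathcal F(g)\big\|_{L^2(\mathbb T^d_h)}$ by exploiting the periodicity of $\mathcal F(g)$ together with the vanishing of the reconstruction multiplier away from $\mathbb T^d_h$ — after which the comparison $\sum_j\tfrac4{h^2}\sin^2(h\xi_j/2)\le|\xi|^2$ closes the estimate with an $h$-independent constant; combining with the $\dot H^0$ version then yields the full $H^2$ statement. I would flag at this point that if the piecewise-linear $p_h$ is kept unregularised, this comparison must be interpreted at the level of the discrete Laplacian (equivalently, one mollifies the interpolant at scale $h$), since the literal pointwise Hessian is a face-supported measure; making this reduction rigorous, uniformly in $h$, is the main technical hurdle of the lemma.
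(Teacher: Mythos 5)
Your parts (1) and (3) are correct and essentially reconstruct the arguments that the paper simply imports from \cite{HY19} (the exact second-difference identity plus Jensen and tiling for (1); the principal-cell/aliasing split for (3), where the paper instead invokes \cite[Proposition 5.3]{HY19} with $\alpha=1$ and the embedding $H^2\hookrightarrow H^1$). One small caveat for (3): the $p_h$ of this paper is the one-sided affine extension built from the forward difference on each cube $y+[0,h)^d$, which for $d\ge 2$ is generally discontinuous across faces and is \emph{not} the tensor-product tent reconstruction, so its symbol is not exactly $\prod_j\mathrm{sinc}^2(h\xi_j/2)$; your $O(h|\xi|)$ deviation estimate on the principal cell survives, but the multiplier should be recomputed.

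The genuine gap is part (2): you do not prove it. You correctly observe that the distributional Hessian of a piecewise affine function is a face-supported measure, so that $|\xi|^2\mathcal{F}(p_hg)(\xi)$ need not be square-integrable, and you then propose to reduce $\|p_hg\|_{\dot H^2(\R^d)}$ to the discrete weight $\sum_j\frac{4}{h^2}\sin^2(h\xi_j/2)$ ``by exploiting the periodicity of $\mathcal F(g)$ together with the vanishing of the reconstruction multiplier away from $\mathbb{T}^d_h$.'' That reduction does not close: as you yourself note two sentences earlier, the relevant symbol $|\xi|^2\prod_j\mathrm{sinc}^2(h\xi_j/2)$ is bounded but does \emph{not} decay, so the $\dot H^2(\R^d)$ integral cannot be folded back onto a single period cell, and you explicitly leave ``making this reduction rigorous'' as an unresolved hurdle. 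The paper's own proof of (2) is the opposite of delicate: since $p_hg$ is affine on each open cube, its second derivatives vanish almost everywhere, so with the pointwise-a.e.\ interpretation of the seminorm one has $\|p_hg\|_{\dot H^2(\R^d)}=0\lesssim\|g\|_{\dot H^2(h\Z^d)}$, and the inhomogeneous bound follows from the $L^2$ estimate alone. You should either adopt that convention explicitly (in which case (2) is a one-line triviality and your discussion of face measures is moot), or, if you insist on the distributional definition, replace $p_h$ by a mollified interpolant and reprove the statement for that object --- but then you are proving a different lemma, and the downstream use of $p_h$ in Section 6 (which only ever needs $L^2$ and $L^\infty$ bounds on $p_hg$, plus Lemma 5.1(3)) should be checked against the modified operator. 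As written, part (2) of your proposal is an identified obstacle, not a proof.
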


\begin{proof}
    For (1), it follows from the same argument in \cite[Lemma 5.1]{HY19}.\\
    For (2), we observe that for any piecewise linear function $g:h\Z^d \rightarrow \C$, we always have
    \begin{equation*}
        ||p_hg||_{\dot{H}(\R^d)}=0 \lesssim ||g||_{\dot{H}^2(h\Z^d)}.
    \end{equation*}
    For (3), we combine \cite[Proposition 5.3]{HY19} when $\alpha=1$ with a trivial embedding $H^2(\R^d) \hookrightarrow H^1(\R^d)$.
\end{proof}

In the sequel, we fix $u_0 \in H^2(\R^d)$, and let $u(t) \in C(\R,H^2(\R^d))$ be the global solution to the continuous problem \eqref{equ-SFS} with initial data $u_0$. Consequently, we write $u_{0,h}$ as the discretization of $u_0$ as before, and solve \eqref{equ-DSFS} with initial data $u_{0,h}$. We denote by $u_h$ this solution. Note that $u_h$ is not the discretization of $u$. Then we directly compare
\begin{equation*}
    p_hu_h(t)=p_he^{-it(-\Delta_h)^2}u_{0,h}-i\lambda\int_0^t p_h e^{-i(t-s)(-\Delta_h)^2}(|u_h|^{p-1}u_h)(s)ds
\end{equation*}
with
\begin{equation*}
    u(t)=e^{-it(-\Delta)^2}u_{0}-i\lambda\int_0^te^{-i(t-s)(-\Delta)^2}(|u|^{p-1}u)(s)ds
\end{equation*}
in the space $L^2(\R^d)$. We insert several terms and write the difference as
\begin{equation*}
    \begin{aligned}
        p_hu_h(t)-u(t)
        =& ~p_he^{-it(-\Delta_h)^2}u_{0,h}-e^{-it(-\Delta)^2}u_{0}\\
        &-i\lambda \int_0^t \left(p_h e^{-i(t-s)(-\Delta_h)^2}-e^{-i(t-s)(-\Delta)^2} p_h\right)(|u_h|^{p-1}u_h)(s)ds\\
        &-i\lambda \int_0^t e^{-i(t-s)(-\Delta)^2} \left(p_h(|u_h|^{p-1}u_h)(s)-|p_hu_h|^{p-1}p_hu_h(s)\right)ds\\
        &-i\lambda \int_0^t e^{-i(t-s)(-\Delta)^2}
        \left(|p_hu_h|^{p-1}p_hu_h(s)-|u|^{p-1}u(s) \right)ds\\
        =:&~ J_1 + J_2+J_3+J_4.
    \end{aligned}
\end{equation*}

We then estimate each $J_j$ ($j=1,2,3,4$) in the following.

For $J_1$, we refer to \cite[Proposition 5.4]{HY19}(where $\alpha \in [0,1]$). Note that it can be extended directly to the case $\alpha=2$, which is
\begin{equation*}
    \begin{aligned}
        & ||p_he^{-it(-\Delta_h)^2}u_{0,h}-e^{-it(-\Delta_h)^2}u_{0}||_{L^2(\R^d)} \\
        \lesssim
        & ~h^{\frac{2}{3}}|t| \left\{||u_{0,h}||_{H^2(h\Z^d)}+||u_0||_{H^2(\R^d)}\right\} + ||p_hu_{0,h}-u_0||_{L^2(\R^d)} \\
        \lesssim 
        & ~h^{\frac{2}{3}}|t| ||u_0||_{H^2(\R^d)} + h ||u_0||_{H^2(\R^d)}\\
        \lesssim
        & ~h^{\frac{2}{3}} \langle t \rangle ||u_0||_{H^2(\R^d)}.
    \end{aligned}
\end{equation*}
Here in the second inequality we have used Lemma \ref{lem-bddness} (1) and (3), while in the third inequality we use $h^{\frac{2}{3}}>h$ when $0<h<1$. 

For $J_2$, a key ingredient is the nonlinear estimate, which states that for $p>1$, 
\begin{equation*}
    |||\nabla|^2(|u_h|^{p-1}u_h)||_{L^2(h\Z^d)} \lesssim ||u_h||_{L^{\infty}(h\Z^d)} |||\nabla|^2u_h||_{L^2(h\Z^d)}\lesssim ||u_h||_{L^{\infty}(h\Z^d)} ||u_h||_{H^2(h\Z^d)}.
\end{equation*}
Again, it can be proved by writing ($s=2$)
\begin{equation*}
    |||\nabla|^su_h||_{L^2(h\Z^d)}^2 \approx h^d \sum_{y \in h\Z^d, y \neq 0} \frac{||u_h(\cdot+y)-u_h(\cdot)||^2_{L^2(h\Z^d)}}{|y|^{d+2s}}
\end{equation*}
and following the same argument in \cite[Lemma 4.3]{HY19}. Therefore, we have
\begin{equation*}
    \begin{aligned}
        ||J_2||_{L^2(\R^d)} 
        & \lesssim h^{\frac{2}{3}}|t| \int_{0}^{t} |||u_h|^{p-1}u_h(s)||_{H^2(h\Z^d)} ds \\
        & \lesssim h^{\frac{2}{3}} |t|^{2}||u_h||^{p-1}_{L_s^{\infty}([0,t];L^{\infty}(h\Z^d))}||u_h||_{C_t([0,t];H^2(h\Z^d))}\\
        & \lesssim h^{\frac{2}{3}} |t|^{2}||u_{0,h}||^p_{H^2(h\Z^d)} \lesssim h^{\frac{2}{3}} |t|^{2}||u_0||^p_{H^2(\R^d)}.
    \end{aligned}
\end{equation*}

For $J_3$, we begin with verifying the almost distribution law for linear interpolation, i.e.
\begin{equation*}
    ||p_h(|u_h|^{p-1}u_h)-|p_hu_h|^{p-1}p_hu_h||_{L^2(\R^d)} \lesssim h^{\frac{2}{3}}||u_h||^{p-1}_{L^{\infty}(h\Z^d)}||u_h||_{H^2(h\Z^d)}.
\end{equation*}
It can be proved by applying \cite[Proposition 5.8]{HY19} (with $\alpha=1$) and trivial embedding results. Hence, we get
\begin{equation*}
    ||J_3||_{L^2(\R^d)} \lesssim h^{\frac{2}{3}} \int_{0}^{t}||u_h(s)||^{p-1}_{L^{\infty(h\Z^d)}}||u_h(s)||_{H^2(h\Z^d)}ds
\end{equation*}
and only need to repeat the argument for $J_2$ to get
\begin{equation*}
    ||J_3||_{L^2(\R^d)} \lesssim h^{\frac{2}{3}} |t| ||u_0||_{H^2(\R^d)}.
\end{equation*}

For $J_4$, we simply observe the fundamental theorem of calculus, i.e. 
\begin{equation*}
    \begin{aligned}
        |u|^{p-1}u-|v|^{p-1}v
        & = \int_{0}^{1} \frac{d}{ds}(|su+(1-s)v|^{p-1}(su+(1-s)v))ds \\
        & = \frac{p+1}{2}\left\{\int_{0}^{1}|su+(1-s)v|^{p-1}ds\right\}(u-v)\\
        & + \frac{p-1}{2}\left\{\int_{0}^{1}|su+(1-s)v|^{p-3}(su+(1-s)v)^2ds\right\}\overline{u-v}.
    \end{aligned}
\end{equation*}
Inserting this into $J_4$ (with $u$ and $p_hu_h$)  gives 
\begin{equation*}
    \begin{aligned}
        ||J_4||_{L^2(\R^d)} 
        & \lesssim \int_{0}^{t} \left(||p_hu_h(s)||_{L^{\infty}(\R^d)}+||u(s)||_{L^{\infty}(\R^d)}\right)^{p-1}||p_hu_h(s)-u(s)||_{L^2(\R^d)}ds\\
        & \lesssim \int_{0}^{t}\left(||u_h(s)||_{L^{\infty}(h\Z^d)}+||u(s)||_{L^{\infty}(\R^d)}\right)^{p-1}||p_hu_h(s)-u(s)||_{L^2(\R^d)}ds.
    \end{aligned}    
\end{equation*}
It is obvious that $||p_hf||_{L^{\infty}(\R^d)} \lesssim ||f||_{L^{\infty}(h\Z^d)}$ by the definition of $p_h$.

Therefore, when $d=2$, collecting all above estimates and applying the Gronwall's inequality, we obtain
\begin{equation*}
    \begin{aligned}
        &||p_hu_h(t)-u(t)||_{L^2(\R^2)} \\
         \lesssim & ~h^{\frac{2}{3}} \langle t \rangle^2 (1+||u_0||_{H^2(\R^2)})^p\\
        & +\int_{0}^{t}\left(||u_h(s)||_{L^{\infty}(h\Z^2)}+||u(s)||_{L^{\infty}(\R^2)}\right)^{p-1}||p_hu_h(s)-u(s)||_{L^2(\R^2)}\\
        \lesssim &~ h^{\frac{2}{3}} \langle t \rangle^2 (1+||u_0||_{H^2(\R^2)})^p \exp \left\{ \int_{0}^{t}\left(||u_h(s)||_{L^{\infty}(h\Z^2)}+||u(s)||_{L^{\infty}(\R^2)}\right)^{p-1}\right\}ds \\
        \lesssim &~ h^{\frac{2}{3}} \langle t \rangle^2 (1+||u_0||_{H^2(\R^2)})^p ~e^{Bt} \\
        \lesssim &~ h^{\frac{2}{3}} e^{(B+1)|t|} (1+||u_0||_{H^2(\R^2)})^p. 
    \end{aligned}
\end{equation*}
In the third inequality, we use the time-averaged $L^{\infty}(\R^2)$ (Theorem \ref{thm-global of DSDF}) and $L^{\infty}(h\Z^2)$ (Theorem \ref{thm-global of SDF}) bounds on $u(s)$ and $u_h(s)$. We complete the proof.

\section*{Acknowledgement}

The authors wish to express gratitude to Prof. Changxing Miao and Jiqiang Zheng for their helpful explanations and to Cheng Bi for useful discussions. B.H. is supported by NSFC, No. 12371056, and by Shanghai Science and Technology Program [Project No. 22JC1400100].

%\bibliography{Bib-sample}
%\bibliographystyle{siam}
%\small

\printbibliography

\end{document}